\newtheorem{thm}{Theorem}[section]
\newtheorem{prop}[thm]{Proposition}
\newtheorem{lem}[thm]{Lemma}
\newtheorem{cor}[thm]{Corollary}
\theoremstyle{remark}
\newtheorem{rem}[thm]{Remark}
\newtheorem{ex}[thm]{Example}
\theoremstyle{definition}
\newtheorem{defn}[thm]{Definition}
\numberwithin{equation}{section}
\newcommand{\bR}{\mathbb R}
\newcommand{\bC}{\mathbb C}
\newcommand{\bP}{\mathbb P}
\newcommand{\blowup}{\overline{\bC P^2}}
\newcommand{\C}{\mathbb{ C}}
\newcommand{\del}{\partial}
\newcommand{\Hom}{\operatorname{Hom}}
\newcommand{\OO}{\mathcal{ O}}
\newcommand{\Z}{\mathbb{ Z}}
\title{Fibrations and fundamental groups of K\"ahler--Weyl manifolds}
\author{G.~Kokarev}
\address{School of Mathematics, The University of Edinburgh, King's
  Buildings, Mayfield Road, Edinburgh EH9 3JZ, UK}
\email{G.Kokarev@ed.ac.uk}
\address[Current address]{Mathematisches Institut, Ludwig-Maximilians-Universit\"at M\"unchen,
Theresienstrasse 39, 80333 M\"unchen, Germany}
\email{Gerasim.Kokarev@mathematik.uni-muenchen.de}
\author{D.~Kotschick}
\address{Mathematisches Institut, Ludwig-Maximilians-Universit\"at M\"unchen,
Theresienstrasse 39, 80333 M\"unchen, Germany}
\email{dieter@member.ams.org}
\date{November 10, 2008, revised July 8, 2009; \copyright{\ G.~Kokarev and D.~Kotschick 2008}}
\subjclass[2000]{primary 32J27, 32Q55, 53C55; secondary 53C28, 53C43, 58C10}
\begin{document}

\begin{abstract}
We extend the Siu--Beauville theorem to a certain class of compact
K\"ahler--Weyl manifolds, proving that they fiber holomorphically over
hyperbolic Riemannian surfaces whenever they satisfy the necessary
topological hypotheses. As applications we obtain restrictions on the
fundamental groups of such K\"ahler--Weyl manifolds, and show that in
certain cases they are in fact K\"ahler.
\end{abstract}

\maketitle

\section{Introduction}\label{s:intro}

There are many results concerning the topology of complex algebraic
varieties and of compact K\"ahler manifolds that are proved using
analytic methods like Hodge theory and harmonic maps. These methods
strongly depend on differential-geometric features of K\"ahler
manifolds that ultimately derive from the K\"ahler identities. Although 
the methods do not immediately extend, sometimes such results do
generalize to non-K\"ahler compact complex surfaces using a
case-by-case analysis appealing to the Enriques--Kodaira
classification. It turns out that K\"ahler manifolds and complex
surfaces are both special cases of complex manifolds supporting
K\"ahler--Weyl structures, and some results that were previously
known with disparate proofs in those two special cases can actually be
proved uniformly for a certain class of K\"ahler--Weyl manifolds.

The notion of a K\"ahler--Weyl structure arises naturally in conformal
geometry, and goes back several decades to papers of Vaisman. We shall
give a brief account of the basic definitions in Section~\ref{s:pre}
below, and refer the reader to~\cite{CP,DO} and the references cited
there for further details. The upshot is that the K\"ahler--Weyl
condition is vacuous in complex dimensions one and two, just like the
K\"ahler condition is vacuous in dimension one, and is equivalent to
the locally conformally K\"ahler condition in higher dimensions.

Recently the first author extended some of the harmonic map techniques
from K\"ahler geometry to the more general setting of K\"ahler--Weyl
geometry, see~\cite{Kok}. This extension involves the study of
pseudo-harmonic or Weyl harmonic maps from K\"ahler--Weyl domains
endowed with so-called pluricanonical metrics. We recall the notion of a
pluricanonical metric in Section~\ref{s:pre}; it covers a large class
of examples and makes the Bochner technique applicable in the setting of
Weyl harmonic maps.

In this paper we use the techniques and results from~\cite{Kok} to
prove and apply a generalization of the following theorem due to Siu and Beauville:
\begin{thm}\label{t:BSorig}
For a compact K\"ahler manifold $M$ the following two statements are equivalent:
\begin{itemize}
\item[I.] $M$ admits a surjective holomorphic map with connected fibers to a compact Riemann surface of genus $\geq 2$, and
\item[II.] $\pi_1(M)$ admits a surjective homomorphism to the fundamental group of a compact Riemann surface of genus $\geq 2$.
\end{itemize}
\end{thm}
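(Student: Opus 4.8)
The plan is to prove the two implications separately: I $\Rightarrow$ II is elementary, while II $\Rightarrow$ I is the substantive Siu--Beauville direction and uses harmonic maps. For I $\Rightarrow$ II, suppose $f\colon M\to\Sigma$ is a surjective holomorphic map with connected fibres onto a curve of genus $\geq 2$; I claim $f_*\colon\pi_1(M)\to\pi_1(\Sigma)$ is onto. If the image $H=f_*\pi_1(M)$ were proper, $f$ would lift to a holomorphic map $\tilde f\colon M\to\Sigma_H$ to the connected covering of $\Sigma$ associated to $H$; since $M$ is compact, $\tilde f(M)$ is a compact connected analytic subset of the Riemann surface $\Sigma_H$, hence a point or all of $\Sigma_H$, and not a point (else $f$ is constant), so $\tilde f$ is surjective and $H$ has finite index $k$. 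But then every fibre of $f$ equals $\coprod_{i=1}^{k}\tilde f^{-1}(p_i)$ and so has at least $k$ components, forcing $k=1$. Hence $f_*$ is surjective. (This argument works over any curve target and will be reused below.)

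For II $\Rightarrow$ I, I follow Siu and Beauville. Fix a hyperbolic metric on the target $\Sigma$ (genus $\geq 2$), so that $\Sigma$ has negative curvature and is aspherical; then the surjection $\varphi\colon\pi_1(M)\to\pi_1(\Sigma)$ is induced by a smooth map, which by the theorem of Eells and Sampson may be taken harmonic, say $u\colon M\to\Sigma$, and $u$ is non-constant since $\varphi\neq 1$. The analytic heart of the proof is Sampson's theorem (equivalently Siu's Bochner-type formula): as the domain is K\"ahler and the target non-positively curved, $u$ is pluriharmonic, so $\partial u$ is a holomorphic section of $\Omega^1_M\otimes u^*T^{1,0}\Sigma$, and on the dense open set $U\subset M$ where $\partial u\neq 0$ its kernel is a holomorphic, integrable distribution of corank one (integrability being automatic here because $\dim_{\mathbb C}\Sigma=1$). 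Using in addition the negative curvature of $\Sigma$, Sampson's analysis shows that $u$ is actually constant---not merely pluriharmonic---along the leaves of the resulting codimension-one holomorphic foliation $\mathcal F$ of $U$; equivalently, away from the critical set of $u$ its fibres are complex hypersurfaces. (If instead $\partial u\equiv 0$, then $u$ is anti-holomorphic, and taking the conjugate complex structure on $\Sigma$ together with a Stein factorisation reduces us to the situation of I; so assume $\partial u\not\equiv 0$.)

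The remaining step---and, I expect, the main obstacle---is to promote $\mathcal F$ to an honest fibration: one must show that its leaves are compact and that $\mathcal F$ extends as a holomorphic foliation across the analytic critical set $M\setminus U$. This is precisely where the depth of Siu's argument lies; alternatively the endgame can be organised through the Castelnuovo--de Franchis theorem once one has produced two $\mathbb C$-linearly independent holomorphic $1$-forms on $M$ with vanishing wedge product, which is itself part of the difficulty. Granting this, the leaf space of $\mathcal F$ is, after a Stein factorisation, a smooth compact Riemann surface $C$, and one obtains a surjective holomorphic map $p\colon M\to C$ with connected fibres, through which $u$ factors, $u=q\circ p$. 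Since $u_*=\varphi$ is surjective and $p_*$ is surjective by the first implication, $q_*\colon\pi_1(C)\to\pi_1(\Sigma)$ is surjective; passing to abelianisations yields a surjection $\mathbb Z^{2\gamma}\to\mathbb Z^{2g}$ with $\gamma=\mathrm{genus}(C)$ and $g=\mathrm{genus}(\Sigma)$, whence $\gamma\geq g\geq 2$, and $p\colon M\to C$ is the desired holomorphic fibration.
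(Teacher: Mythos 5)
Your I $\Rightarrow$ II argument via covering spaces is correct and complete. The problem is the converse, which is the whole content of the theorem. You set up Siu's harmonic-map strategy correctly -- Eells--Sampson existence of a harmonic representative $u$ inducing $\varphi$, pluriharmonicity from the Siu--Sampson Bochner formula, holomorphicity of $\partial u$ and the resulting corank-one holomorphic foliation -- but you then stop at precisely the step that constitutes the theorem: showing that this local foliation actually comes from a surjective holomorphic map onto a compact curve. You identify this yourself as ``the main obstacle'' and proceed by ``granting'' it. Everything before that point is standard machinery valid for any non-positively curved target, and everything after it is bookkeeping; the compactness of the leaves and the extension across the critical locus are exactly where Siu's and Sampson's work lies, so as written the proposal is not a proof.

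To close the gap along your chosen route you need the factorization theorem of Siu / Carlson--Toledo / Jost--Yau: a pluriharmonic map to a negatively curved target whose differential has rank two on a dense open set factors as $\phi\circ h$ with $h$ holomorphic onto a compact Riemann surface. This is exactly what the paper imports (in strengthened form, Theorem~\ref{t:fact}, from~\cite{Kok}) and uses as the engine of its generalization, Theorem~\ref{t:BS}; quoting it would legitimately finish your argument. For Theorem~\ref{t:BSorig} itself the paper instead sketches Catanese's route, which bypasses harmonic maps entirely and is substantially more elementary: the surjection $\varphi\colon\pi_1(M)\to\pi_1(\Sigma_g)$ pulls back a two-dimensional isotropic subspace of $H^1(\Sigma_g;\bR)$ to a two-dimensional isotropic subspace of $H^1(M;\bR)$ (condition~IV of the Introduction); the Hodge decomposition on the compact K\"ahler manifold $M$ then produces two $\C$-linearly independent holomorphic one-forms with vanishing wedge product, and the classical Castelnuovo--de~Franchis lemma yields the fibration. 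If you want a self-contained proof, that is the path of least resistance; in either case the step you have omitted is the one that must actually be proved.
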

Obviously the first statement implies the second. The converse was first proved by Siu~\cite{Siu} using harmonic maps. 
It was later rediscovered by Beauville, whose proof is a sophisticated application of Hodge theory; see the appendix 
to~\cite{Cat}. It is not hard to see, and is explained in~\cite[Chapter 2]{ABCKT}, that in the situation of Theorem~\ref{t:BSorig} 
the two statements are also equivalent to the following:
\begin{itemize}
\item[III.] {\it $\pi_1(M)$ admits a surjective homomorphism to a non-abelian free group.}
\end{itemize}
A simple proof of the Siu--Beauville theorem is due to Catanese~\cite{Cat}. Firstly, one notes that II and III each imply the following:
\begin{itemize}
\item[IV.] {\it $H^1(M;\bR)$ admits an isotropic subspace of dimension $\geq 2$.}
\end{itemize}
Here a subspace $U\subset H^1(M;\bR)$ is said to be isotropic if the cup product map $\Lambda^2 U\longrightarrow H^2(M;\bR)$
vanishes identically. 
Secondly, from an isotropic subspace as in IV, a straightforward application of Hodge theory produces two holomorphic one-forms 
with trivial wedge product, to which the Castelnuovo-de~Franchis lemma can be applied, cf.~\cite{Cat,ABCKT}. Thus, on a compact 
K\"ahler manifold conditions I, II, III and IV are all equivalent.

For compact complex surfaces it is known that the Siu--Beauville
theorem holds, by appealing to the Kodaira classification in the
non-K\"ahler case, see~\cite[Chapter 2]{ABCKT}. It is also known that
statement IV is strictly weaker than I and II in this case; the
Kodaira--Thurston manifold is a compact complex surface for which
IV holds although the surface does not fiber over a curve of genus $\geq 2$.

The main results of this paper are Theorems~\ref{t:BS} and~\ref{t:main}, 
proved in Section~\ref{s:BS}. Theorem~\ref{t:BS} says that conditions
I, II and III are equivalent for compact pluri-K\"ahler--Weyl manifolds.
This unifies the known results for K\"ahler manifolds and complex
surfaces, and generalizes them to locally conformally K\"ahler manifolds with
pluricanonical metrics. Our argument applies the strategy of
Siu~\cite{Siu} using Weyl harmonic instead of the usual harmonic maps,
and relies on some of the results from~\cite{Kok}. As an immediate
corollary of the proof we will see that non-abelian free groups can
not be the fundamental groups of compact pluri-K\"ahler--Weyl
manifolds. For K\"ahler manifolds this is of course an easy
application of Hodge theory together with covering arguments. For
compact complex surfaces the corollary was also known as a consequence
of classification results, see~\cite[Chapter 2]{ABCKT}. For
pluri-K\"ahler-Weyl manifolds of higher dimension, the corollary is
new. In Theorem~\ref{t:main} we prove that statement I holds more
generally, assuming only that $\pi_1(M)$ admits a representation with 
non-cyclic image in the fundamental group of some hyperbolic manifold,
which does not have to be a surface.

In Section~\ref{s:twistor} we apply the main results to twistor spaces
of (half-)conformally flat manifolds. The conclusion is that if the
fundamental group is large, then a twistor space can not be
pluri-K\"ahler--Weyl. This generalizes various results to the effect
that K\"ahlerian twistor spaces are simply connected, cf.~\cite{BMN,Cam,Hitchin,Slup}.

In Section~\ref{s:appl} we give further applications of
Theorem~\ref{t:BS}. We prove that pluri-K\"ahler--Weyl manifolds with
certain fundamental groups have to be K\"ahler, and give 
obstructions to the existence of pluri-K\"ahler--Weyl structures on
some complex manifolds.

\subsection*{Acknowledgements:}
The first author was supported by an EPSRC fellowship at the University
of Edinburgh. Parts of this work were done while the second author was taking part in the activity on ``Extremal
K\"ahler metrics and K\"ahler--Ricci flow'' at the Centro Ennio De Giorgi in Pisa. He is grateful
to S.~Salamon for the invitation, and for a helpful suggestion in connection with this work.
The paper was completed while the second author enjoyed the support of The Bell Companies 
Fellowship at the Institute for Advanced Study in Princeton.


\section{Preliminaries and background}\label{s:pre}

\subsection{K\"ahler--Weyl geometry}\label{ss:KW}

Let $(M,c)$ be a smooth conformal manifold. A Weyl structure or Weyl
connection on $(M,c)$ is a torsion-free connection $\nabla^W$ which
preserves the conformal structure $c$; this means that for any
$g\in c$ there exists a $1$-form $\theta$ (sometimes called a Higgs
field) such that 
$$
\nabla^W g = \theta\otimes g \ .
$$
A standard calculation shows that this condition is equivalent to 
\begin{equation}\label{eq:weyl}
\nabla^W_X Y = \nabla_X Y - \frac{1}{2}(\theta(X)Y+\theta(Y)X-g(X,Y)\theta^{\sharp}) \ ,
\end{equation}
where $\nabla$ is the Levi-Civita connection of $g$, and
$\theta^{\sharp}$ is the vector field $g$-dual to $\theta$.
Clearly the space of Weyl connections is a non-empty affine space
whose vector space of translations is formed by $1$-forms. 

The property of being Hermitian with respect to a fixed almost complex
structure $J$ is conformally invariant, and thus the following
definition makes sense:
\begin{defn} (\cite{CP})
A K\"ahler--Weyl structure on $M$ is a triple $(J,c,\nabla^W)$,
where $J$ is an almost complex structure, $c$ is a conformal structure
which is Hermitian with respect to $J$, and $\nabla^W$ is a Weyl connection
which preserves $J$, i.~e. $\nabla^W J=0$.
\end{defn}
Since the Weyl connection is torsion-free by definition, the condition
$\nabla^W J=0$ implies that $J$ is integrable. Thus, there is no loss
of generality in assuming from the outset that $(M,J)$ is a complex
manifold. In the special case when the Weyl connection coincides with
the Levi-Civita connection of $g\in c$, the above definition reduces
to one of the standard definitions of a K\"ahler structure. In complex
dimension one every conformal structure is part of a K\"ahler--Weyl
structure which is in fact K\"ahler.

For a given metric $g\in c$ the fundamental two-form $\omega$ of the
Hermitian structure $(M,J,g)$ is defined as usual by 
$$
\omega (X,Y) = g(X,JY) \ .
$$
For a K\"ahler--Weyl structure the defining conditions imply
\begin{equation}\label{eq:omegaint}
d\omega = \omega\wedge\theta \ ,
\end{equation}
which means that $\theta$ is the Lee form of the Hermitian structure $(M,J,g)$.
Moreover, \eqref{eq:omegaint} implies
$$
\omega\wedge d\theta = 0 \ .
$$
In complex dimensions $\geq 3$ the multiplication with the fundamental
$2$-form is injective on $2$-forms, and we conclude that the Lee form
$\theta$ is closed. Therefore in these dimensions any metric $g\in c$
is locally conformally K\"ahler. In more detail, the form $\theta$ is
locally exact by the Poincar\'e lemma, $\theta=df$. The locally
defined metric $e^{-f} g$ is preserved by the Weyl connection
$\nabla^W$ and, hence, is K\"ahler. Conversely, given a locally
conformally K\"ahler metric, the Levi-Civita connections of the
(essentially unique) locally defined K\"ahler metrics fit together to
form a global Weyl connection.

In complex dimension $= 2$, the identity~\eqref{eq:omegaint} is true
(for some $\theta$, uniquely determined by $\omega$) for any Hermitian metric, because the
multiplication with the fundamental two-form is an isomorphism on
$1$-forms. The form $\theta$ defines a Weyl connection by
formula~\eqref{eq:weyl}, which actually preserves the complex
structure.

We summarize this discussion in the following:
\begin{prop}
In complex dimensions $\leq 2$ every complex manifold admits a
K\"ahler--Weyl structure. In complex dimensions $\geq 3$ a complex
manifold admits a K\"ahler--Weyl structure if and only if it admits a
locally conformally K\"ahler structure.
\end{prop}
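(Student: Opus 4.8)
The plan is to verify the two assertions of the proposition by tracking the argument already sketched in the preceding discussion and filling in the standard computations. Since the statement has two halves — dimensions $\leq 2$ and dimensions $\geq 3$ — I would organize the proof accordingly, and the bulk of the work lies in dimension $2$, which is the genuinely non-vacuous case.

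\medskip

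First I would treat complex dimension $1$, where every conformal structure together with its Levi-Civita connection is K\"ahler--Weyl (with vanishing Lee form, or equivalently the K\"ahler condition is vacuous), so there is nothing to prove. Next, in complex dimension $2$, I would fix a complex manifold $(M,J)$ and an arbitrary Hermitian metric $g$ in some conformal class $c$, with fundamental two-form $\omega$. The key linear-algebra fact is that on a complex surface wedging with $\omega$ defines an isomorphism $\Lambda^1 \to \Lambda^3$ pointwise, hence there is a \emph{unique} one-form $\theta$ with $d\omega = \omega\wedge\theta$; this $\theta$ is smooth because it is obtained from $d\omega$ by a pointwise algebraic isomorphism depending smoothly on $g$. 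I would then define $\nabla^W$ by formula~\eqref{eq:weyl} using this $\theta$; by construction $\nabla^W$ is torsion-free and satisfies $\nabla^W g = \theta\otimes g$, so $(c,\nabla^W)$ is a Weyl structure. The one point that needs a short computation is that $\nabla^W J = 0$: this follows by differentiating the relation $\omega = g(\cdot, J\cdot)$ and using $d\omega = \omega\wedge\theta$ together with the integrability of $J$ (the Newlander--Nirenberg vanishing of the Nijenhuis tensor). I expect this verification — showing that the Weyl connection attached to the Lee form of a Hermitian surface automatically preserves $J$ — to be the main obstacle, or at least the only step that is not immediate; it is a classical fact but does require unwinding the definitions carefully, and one should be careful that no K\"ahler--Weyl assumption is smuggled in. This establishes the first sentence of the proposition.

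\medskip

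For complex dimension $\geq 3$, both implications are essentially contained in the discussion above and I would just assemble them. If $M$ admits a K\"ahler--Weyl structure $(J,c,\nabla^W)$, then for any $g\in c$ the fundamental two-form satisfies~\eqref{eq:omegaint}, hence $\omega\wedge d\theta = 0$; since multiplication by $\omega$ is injective on $2$-forms in these dimensions, $d\theta = 0$, so $\theta$ is closed, and writing $\theta = df$ locally via the Poincar\'e lemma, the local metrics $e^{-f}g$ are preserved by $\nabla^W$ and are therefore K\"ahler — so $g$ is locally conformally K\"ahler. Conversely, given a locally conformally K\"ahler metric $g$, the Levi-Civita connections of the locally defined K\"ahler metrics agree on overlaps (the local conformal factors differ by constants, or one checks directly via~\eqref{eq:weyl} that the resulting Weyl connection is globally well defined) and patch to a global torsion-free connection $\nabla^W$ preserving $c$ and $J$, giving a K\"ahler--Weyl structure. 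I would state explicitly that the only mild point here is the gluing of the local Levi-Civita connections, which one handles by noting that~\eqref{eq:weyl} expresses $\nabla^W$ intrinsically in terms of $g$ and the globally defined closed Lee form $\theta$, so there is no genuine choice to patch. Combining the two cases completes the proof.
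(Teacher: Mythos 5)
Your proposal follows exactly the argument the paper gives in the discussion preceding the proposition: in dimension two the isomorphism $\omega\wedge\cdot\colon\Lambda^1\to\Lambda^3$ produces the unique Lee form $\theta$, which defines the Weyl connection via~\eqref{eq:weyl}, and in dimension $\geq 3$ the injectivity of $\omega\wedge\cdot$ on $2$-forms forces $d\theta=0$ and yields the equivalence with the locally conformally K\"ahler condition. You correctly isolate the one step the paper also leaves unproved (that the Weyl connection of a Hermitian surface automatically satisfies $\nabla^W J=0$, which does require integrability of $J$), so the proposal is correct and takes essentially the same route.
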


Let $(M,c)$ be a conformal manifold equipped with a Weyl connection.
Recall that by the theorem of Gauduchon~\cite{Gau} there exists a
canonical metric $g\in c$, unique up to homothety, such that the
corresponding Lee form is co-closed, $tr_g(\nabla\theta)=0$. For
example, for a globally conformally K\"ahler manifold the canonical
metric is K\"ahler and its Lee form vanishes identically. Below we shall
consider K\"ahler--Weyl manifolds whose canonical metrics satisfy
an extra hypothesis introduced in~\cite{Kok}:
\begin{defn}(\cite{Kok})\label{d:pluri}
A metric $g\in c$ on a K\"ahler--Weyl manifold is called
pluricanonical if the $(1,1)$-part of the covariant derivative of the
Lee from vanishes, i.~e.~$(\nabla\theta)^{1,1}=0$.
\end{defn}
Note that all {\it Vaisman manifolds} (also called {\it generalised
  Hopf manifolds}) have pluricanonical metrics. These manifolds
  actually satisfy a much stronger hypothesis: they admit metrics with
  parallel Lee form, i.~e.~$\nabla\theta=0$. Vaisman geometry has close
  links with Sasakian geometry and has been studied intensively in
  recent years. We refer to~\cite{DO, ornea} for the details and
  references on this subject.

\subsection{Weyl harmonic maps}\label{ss:Weyl}

Let $(M,c)$ be a conformal manifold equipped with a Weyl connection
and $(N,h)$ be an arbitrary Riemannian manifold. 

For a fixed metric $g\in c$ consider the equation $tr_g
(\widetilde\nabla Df)=0$ for maps $f\colon M\longrightarrow N$. Here
the differential $Df$ is thought of as a section of
$\Hom(TM,f^*TN)=T^*M\otimes f^*TN$, and $\widetilde\nabla$ is defined
as the tensor product of the dual Weyl connection with the pullback of
the Levi-Civita connection of $(N,h)$. The property of a map to be a
solution of this equation does not depend on the choice of a reference 
metric $g\in c$.
\begin{defn}(\cite{Kok})
A map $f\colon M\longrightarrow N$ from a Weyl manifold $M$ to a
Riemannian manifold $N$ is called Weyl harmonic if it solves 
the equation $tr_g (\widetilde\nabla Df)=0$.
\end{defn}
In the special case when the Weyl connection is the Levi-Civita
connection of $(M,g)$, the defining equation  reduces to the harmonic
map equation. In general, it differs from the usual harmonic map
equation by the term $((n-2)/2)Df(\theta^{\sharp})$, where $n$ is the
real dimension of $M$. Thus, for two-dimensional domains the Weyl
harmonic maps are precisely the usual harmonic maps. 

We refer to~\cite{Kok} for basic existence and uniqueness results for
Weyl harmonic maps. An important  ingredient for our arguments here
concerns pluriharmonicity of Weyl harmonic maps. This occurs when the
K\"ahler--Weyl domain $M$ has complex dimension $2$ or admits a
pluricanonical metric; see~\cite[Theorem~4.2]{Kok}. This motivates the 
following:
\begin{defn}
A K\"ahler--Weyl manifold is called pluri-K\"ahler--Weyl if it is two-dimensional
or if it admits a pluricanonical metric in the sense of Definition~\ref{d:pluri}.
\end{defn}

Another important ingredient in our proofs will be the following strengthened 
version of results of Carlson--Toledo~\cite{CT} and Jost--Yau~\cite{JY}:
\begin{thm}{\rm (\cite{Kok})}\label{t:fact}
Let $M$ be a closed complex manifold and $N$ a Riemannian manifold of
constant negative curvature. If $f\colon M\longrightarrow N$ is a
pluriharmonic map whose rank is at most two and equals two on an open
and dense subset of $M$, then there exists a compact Riemann surface
$S$, a holomorphic map $h\colon M\longrightarrow S$ and a harmonic map
$\phi\colon S\longrightarrow N$ such that $f=\phi\circ h$.
\end{thm}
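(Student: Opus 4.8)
The plan is to follow the arguments of Carlson--Toledo~\cite{CT} and Jost--Yau~\cite{JY}, but to take the pluriharmonicity of $f$ as the starting point rather than deriving it from a Bochner formula; it is precisely this that frees us from requiring $M$ to be K\"ahler, so that a closed complex manifold suffices. Rescaling, I may assume $N$ has sectional curvature $-1$, so that its curvature tensor has the explicit space-form shape $R^N(X,Y)Z=\langle X,Z\rangle Y-\langle Y,Z\rangle X$. Write $\del f$ for the $(1,0)$-part of $df$, a $1$-form with values in the pullback bundle $\mathcal E:=f^*TN\otimes\C$; pluriharmonicity says exactly that $\del f$ is holomorphic for the holomorphic structure on $\mathcal E$ induced by the $(0,1)$-part of the pullback of the Levi-Civita connection. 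Set $M_0:=\{x\in M : \rk_{\bR}df_x=2\}$, which is open and dense by hypothesis.

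In the first step I would establish the pointwise structure on $M_0$: the $\C$-linear map $\del f_x\colon T_x^{1,0}M\to\mathcal E_x$ has rank exactly one, equivalently the restriction of $df_x$ to a complement of $\Ker df_x$ is complex linear onto a $2$-plane in $T_{f(x)}N$, which thereby inherits a complex structure. This is the one place where \emph{constant negative} curvature, and not merely nonpositivity, is essential: over a flat target one can have rank-two $\del f$, for instance a holomorphic submersion onto a polydisc, with no factorisation through a curve. The mechanism is a Bochner-type argument in the spirit of Sampson's work on harmonic maps of K\"ahler manifolds, adapted to the pluriharmonic setting: the holomorphic $(2,0)$-form $\eta$ obtained by contracting $\del f\wedge\del f$ with the complex-bilinear extension of the metric of $N$ measures the failure of $\del f$ to have rank $\le 1$, and pluriharmonicity together with the explicit form of $R^N$, after an integration over the compact manifold $M$, forces $\eta$ to vanish identically. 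It then follows that $\mathcal F:=\Ker\del f$ is a holomorphic subbundle of $T^{1,0}M_0$ of corank one, and it is involutive because the $(2,0)$-part of the exterior covariant derivative of the holomorphic form $\del f$ vanishes; thus $\mathcal F$ is a holomorphic foliation of $M_0$. Integrating it, every $x\in M_0$ has a neighbourhood $U$ carrying a holomorphic submersion $h_U\colon U\to\Delta_U$ onto a disc with $\Ker dh_U=\mathcal F|_U$ and a map $\phi_U\colon\Delta_U\to N$ with $f|_U=\phi_U\circ h_U$; since $f$ is pluriharmonic and $h_U$ a holomorphic submersion, $\phi_U$ is harmonic, and it is a conformal branched minimal immersion because $\rk df=2$ on $M_0$.

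The second step is to assemble the local data into a single \emph{compact} Riemann surface $S$, a holomorphic map $h\colon M\to S$, and a harmonic $\phi\colon S\to N$ with $f=\phi\circ h$. On overlaps the $h_U$ differ by local biholomorphisms of discs, so they glue the $\Delta_U$ into the leaf space of $\mathcal F$; what has to be shown is that this leaf space is a compact curve. Here the compactness of $M$ is indispensable, and one uses that, $N$ being a space form, the globally defined $f$ already controls the transverse geometry: the leaves of $\mathcal F$ are the connected components of the fibres of $f$ over $M_0$, their closures in $M$ are analytic, and the resulting equivalence relation is analytic with a normal one-dimensional quotient $S$, necessarily compact since $M$ is. Equivalently, one takes the Stein factorisation $M\to S\to N$ of the map supplied by this analysis. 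The resulting $h\colon M\to S$ is holomorphic (locally it is $h_U$), $f$ is constant on its fibres by construction, so $f$ descends to $\phi\colon S\to N$, and $\phi$ is harmonic because $f=\phi\circ h$ on the dense open set $M_0$ with $h$ a holomorphic submersion there, harmonicity of a map from a surface being a local and conformally invariant property.

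I expect the main obstacle to be this second step: the passage from a local holomorphic foliation to an honest fibration over a \emph{compact} Riemann surface. This is where compactness of $M$ enters in an essential way, and it rests on the standard but delicate fact --- already at the heart of~\cite{CT,JY} --- that a holomorphic foliation arising in this manner has analytic leaf closures and a leaf space that is a compact curve. The pointwise rigidity of the first step is the other real hurdle, and it is the step that genuinely uses constant, rather than merely nonpositive, curvature.
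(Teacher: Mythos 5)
First, a point of orientation: the paper does not prove Theorem~\ref{t:fact} at all --- it is imported from~\cite{Kok}, where it appears precisely as a strengthening of the factorization theorems of Carlson--Toledo and Jost--Yau to closed complex (not necessarily K\"ahler) domains, with pluriharmonicity taken as a hypothesis rather than derived. Your architecture --- pointwise rank-one statement for $\partial f$, hence a corank-one holomorphic foliation on $M_0$, then globalization of the leaf space to a compact curve --- is the right one. But your mechanism for the first step is flawed in two ways. The form $\eta$ you define, the contraction of $\partial f\wedge\partial f$ with the complex-bilinear extension of the metric of $N$, is identically zero for trivial reasons: the bilinear form is symmetric and the wedge antisymmetric, so $\eta(X,Y)=\langle\partial f(X),\partial f(Y)\rangle-\langle\partial f(Y),\partial f(X)\rangle=0$ whatever the rank of $\partial f$; it detects nothing. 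More seriously, the integration over $M$ you invoke is not available: $M$ carries no K\"ahler metric here, and the Sampson--Siu integration by parts is exactly what the hypothesis ``pluriharmonic'' is standing in for. Fortunately no integration is needed. Pluriharmonicity says $D''\partial f=0$ for the $(0,1)$-part $D''$ of the pullback connection; applying $D''$ again gives the \emph{pointwise} identity $F^{0,2}\wedge\partial f=0$, i.e.\ $R^N\bigl(\overline{\partial f(X)},\overline{\partial f(Y)}\bigr)\partial f(Z)=0$ for all $X,Y,Z\in T^{1,0}M$. Substituting the space-form expression for $R^N$ and taking $Z=X$ shows that if $\partial f(X)$ and $\partial f(Y)$ were independent then $\langle\partial f(X),\overline{\partial f(X)}\rangle=0$, forcing $\partial f(X)=0$; hence $\partial f$ has complex rank at most one everywhere. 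This is indeed where constant (rather than merely nonpositive Hermitian) curvature enters, as you anticipated, but it is a pointwise algebraic consequence of pluriharmonicity, not a Bochner integral.

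The second genuine gap is the globalization, which you discharge by appeal to ``the standard but delicate fact at the heart of~\cite{CT,JY}''. Those results are proved for compact \emph{K\"ahler} $M$, and their leaf-space arguments cannot simply be quoted here without checking that no K\"ahler input is used; establishing that the closures of the leaves of $\Ker\partial f$ are analytic across the degeneracy locus $M\setminus M_0$ and that the leaf space is a compact curve for a general closed complex $M$ is exactly the content of the strengthening carried out in~\cite{Kok}, and it is the heart of the theorem. Your parenthetical appeal to the Stein factorization of ``the map supplied by this analysis'' does not help, since $f$ maps to a Riemannian manifold and is not holomorphic, so there is no Stein factorization until the holomorphic map to a curve has already been constructed. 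Finally, two small repairs: the local maps $\phi_U$ may branch (harmonic maps of surfaces to space forms need not be immersions), and one must verify that the descended $\phi$ is harmonic across the images of $M\setminus M_0$; both are harmless because $S$ is one-dimensional, but they should be addressed rather than asserted.
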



\section{Fibrations over curves}\label{s:BS}

We now prove the following generalization of Theorem~\ref{t:BSorig}:
\begin{thm}\label{t:BS}
Let $M$ be a closed complex manifold admitting a pluri-K\"ahler--Weyl
structure. Then the following three statements are equivalent:
\begin{enumerate}
\item[I.] $M$ admits a surjective holomorphic map with connected fibers to a closed Riemann surface of genus $\geq 2$,
\item[II.] the fundamental group $\pi_1(M)$ admits a surjective homomorphism to the fundamental group of a closed 
Riemann surface of genus $\geq 2$, and
\item[III.] the fundamental group $\pi_1(M)$ admits a surjective homomorphism to a non-abelian free group.
\end{enumerate}
\end{thm}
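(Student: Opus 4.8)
The plan is to follow the strategy of Catanese's proof of the classical Siu--Beauville theorem, replacing the use of ordinary harmonic maps by the Weyl harmonic maps of~\cite{Kok}, and substituting Theorem~\ref{t:fact} for the classical factorization theorem of Carlson--Toledo and Jost--Yau. The implication $\mathrm{I}\Rightarrow\mathrm{II}$ is immediate, since a surjective holomorphic map with connected fibers to a curve of genus $\geq 2$ induces a surjection on fundamental groups. The implication $\mathrm{II}\Rightarrow\mathrm{III}$ is also elementary: the fundamental group of a closed surface of genus $\geq 2$ surjects onto a free group of rank $2$ (collapse one handle), so composing gives a surjection $\pi_1(M)\twoheadrightarrow F_2$. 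The substance of the theorem is therefore the implication $\mathrm{III}\Rightarrow\mathrm{I}$.

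To prove $\mathrm{III}\Rightarrow\mathrm{I}$, suppose $\rho\colon\pi_1(M)\twoheadrightarrow F$ is a surjection onto a non-abelian free group; passing to a rank-$2$ free quotient we may assume $F=F_2$. Realize $F_2=\pi_1(\Sigma)$, where $\Sigma$ is a wedge of two circles, or better, embed $F_2$ as a finite-index subgroup of the fundamental group of a closed hyperbolic surface, or simply view $F_2$ as acting on a tree or on the hyperbolic plane $\mathbb{H}^2$ via a convex-cocompact (Schottky) representation, so that $\mathbb{H}^2/F_2$ is a complete hyperbolic surface $N$ (noncompact, of finite type) with $\pi_1(N)=F_2$. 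Fix a pluricanonical metric $g\in c$ in the given pluri-K\"ahler--Weyl structure on $M$ (in complex dimension $2$ any Gauduchon metric will do). The representation $\rho$ together with a choice of $\rho$-equivariant map gives, by the equivariant harmonic map existence theory for maps into nonpositively curved targets (here one must use the Weyl harmonic map theory of~\cite{Kok}, whose existence results apply since $M$ is closed and $N$ is complete of nonpositive curvature, together with a properness/energy argument to control the image in the cusps), a Weyl harmonic map $f\colon M\longrightarrow N$ inducing $\rho$ on $\pi_1$. Since $M$ is pluri-K\"ahler--Weyl, \cite[Theorem~4.2]{Kok} shows that $f$ is pluriharmonic.

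Next one analyzes the rank of $df$. Because the image $\rho(\pi_1(M))=F_2$ is non-abelian and free, it contains no nontrivial abelian normal subgroup; this forces the rank of $f$ to be at least $2$ on a dense open set — if $f$ had rank $\leq 1$ everywhere, its image would lie in a geodesic and the induced map on $\pi_1$ would have abelian (in fact cyclic) image, contradicting non-cyclicity of $F_2$. On the other hand, $N$ has constant negative curvature, and a Bochner/Siu-type argument for pluriharmonic maps into constant-curvature targets (as in Siu~\cite{Siu} and Carlson--Toledo~\cite{CT}, adapted to the Weyl setting via the pluriharmonicity in~\cite{Kok}) shows that the rank of $df$ cannot exceed $2$: the image is a totally geodesic surface, and the complex structure on $M$ pulls back appropriately. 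Thus $f$ has rank exactly $2$ on an open dense subset. Now Theorem~\ref{t:fact} applies directly: there is a closed Riemann surface $S$, a holomorphic map $h\colon M\longrightarrow S$, and a harmonic map $\phi\colon S\longrightarrow N$ with $f=\phi\circ h$. Replacing $h$ by its Stein factorization we may assume $h$ has connected fibers. Finally one checks that $\mathrm{genus}(S)\geq 2$: the map $h_*\colon\pi_1(M)\to\pi_1(S)$ must have image of finite index (since $\phi_*\circ h_* = \rho$ has the non-cyclic image $F_2$ inside $\pi_1(N)=F_2$, the image of $h_*$ cannot be cyclic, hence $\pi_1(S)$ is non-cyclic and $S$ is not rational or elliptic), so $S$ has genus $\geq 2$ and $h$ is the desired fibration.

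The main obstacle is the existence step for the Weyl harmonic map into the \emph{noncompact} hyperbolic surface $N$ and the control of its behavior near the cusps: one needs to guarantee that the equivariant Weyl harmonic map exists and has image contained in a compact core, so that the target behaves as a manifold of constant negative curvature for the purposes of applying~\cite[Theorem~4.2]{Kok} and Theorem~\ref{t:fact}. This can be handled by choosing the Schottky representation $\rho$ so that $N$ is \emph{convex-cocompact} (no cusps), whence $N$ retracts onto a compact hyperbolic surface-with-boundary and the equivariant harmonic map, by the standard convexity of the energy and the nonpositive curvature, has image in the convex core; the remaining analytic inputs — existence, uniqueness up to the obvious ambiguity, and pluriharmonicity — are then exactly the results quoted from~\cite{Kok}. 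Everything else is a routine adaptation of Catanese's and Siu's arguments.
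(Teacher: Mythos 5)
Your overall strategy (Weyl harmonic representative, pluriharmonicity via \cite[Theorem~4.2]{Kok}, rank analysis, then Theorem~\ref{t:fact} plus Stein factorization) is the right one, but there is a genuine gap at the existence step. You realize $F_2$ as the fundamental group of a \emph{noncompact} complete hyperbolic surface $N=\mathbb{H}^2/F_2$ and then need an equivariant Weyl harmonic map $M\to N$ inducing $\rho$. The existence result you cite, \cite[Theorem~2.2]{Kok}, is for maps to closed targets; its extension to complete noncompact targets is not among the quoted results, and your proposed fix does not go through as stated: Weyl harmonic maps are in general \emph{not} critical points of an energy functional (the equation differs from the harmonic map equation by the first--order term $\tfrac{n-2}{2}\,Df(\theta^{\sharp})$), so the ``convexity of the energy'' and energy-minimization arguments that control the image in the convex core in the classical Corlette--Labourie setting are not available here. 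You have correctly identified this as the main obstacle, but you have not closed it, and closing it would require new analysis beyond \cite{Kok}.

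The paper sidesteps the issue entirely with a purely group-theoretic device you are missing: compose the surjection $\pi_1(M)\twoheadrightarrow F_k$ with the injection $F_k\hookrightarrow\pi_1(\Sigma_k)$ that sends the $i$th free generator to a standard generator of the $i$th torus summand in a connected-sum decomposition of the \emph{closed} genus-$k$ surface $\Sigma_k$. The resulting homomorphism $\varphi\colon\pi_1(M)\to\pi_1(\Sigma_k)$ has non-abelian free (in particular non-cyclic) image, so \cite[Theorem~2.2]{Kok} applies verbatim with the closed hyperbolic target $\Sigma_k$, and the rest of your argument then runs as written. Two smaller remarks: since the target is a surface, the bound $\mathrm{rk}\,Df\le 2$ is automatic and your Bochner/Siu argument for it is unnecessary (that input, \cite[Corollary~4.6]{Kok}, is only needed for the higher-dimensional targets of Theorem~\ref{t:main}); and the lower bound on the rank is obtained in the paper from \cite[Proposition~1.2]{Kok} (rank $\le 1$ everywhere forces the image to be a closed geodesic, contradicting non-cyclicity of the image of $\varphi$), which matches your reasoning.
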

\begin{proof}
As noted in the Introduction, each of the statements implies the next one. Thus, we only have to prove that III implies I. 
Clearly we may assume that the complex dimension of $M$ is at least
$2$. Fix a pluri-K\"ahler--Weyl structure on $M$.

Suppose $\pi_1(M)$ surjects onto $F_k$, the free group of rank $k\geq
2$. We can compose this surjection with the homomorphism
$F_k\longrightarrow \pi_1(\Sigma_k)$ given by sending the $i$th
standard generator of $F_k$ to a standard generator of the fundamental
group of the $i$th summand in a decomposition of $\Sigma_k$ as a
connected sum of $k$ tori. This gives us a homomorphism
$\varphi\colon\pi_1(M)\longrightarrow \pi_1(\Sigma_k)$, whose image is
a non-abelian free group; in particular it is not cyclic. Let $f\colon
M\longrightarrow\Sigma_k$ be a smooth map with $f_*=\varphi$ on
$\pi_1(M)$.

Now choose a hyperbolic metric on $\Sigma_k$. Since the image of
$\varphi$ is not cyclic, by~\cite[Theorem~2.2]{Kok} the map $f$ is
homotopic to a Weyl harmonic map, which we also denote by $f$. By
unique continuation, the latter map can not be constant on an
open set. Further, it has rank two on an open and dense subset of $M$,
because otherwise by~\cite[Proposition~1.2]{Kok} its image would
be a closed geodesic, contradicting the fact that the image of $f_*$
is not cyclic.

Recall that the hyperbolic metric on $\Sigma_k$ has non-positive
Hermitian sectional curvature in the sense of Sampson,
cf.~\cite[Chapter 6]{ABCKT}. Therefore, by~\cite[Theorem~4.2]{Kok} any
Weyl harmonic map is pluriharmonic. To summarize, we have a
pluriharmonic map $f\colon M\longrightarrow\Sigma_k$ inducing
$\varphi\colon\pi_1(M)\longrightarrow \pi_1(\Sigma_k)$, and the rank
of the differential $Df$ is two on an open and dense subset of
$M$. Thus we can apply the factorization theorem,
Theorem~\ref{t:fact}, to conclude that $f$ factors through a
holomorphic map $h\colon M\longrightarrow S$ to a compact Riemann
surface. Clearly, the genus of $S$ is at least two, $g(S)\geq 2$.

Consider the Stein factorization of the map $h$
$$
M\stackrel{h_1}{\longrightarrow} C\stackrel{h_2}{\longrightarrow} S,
$$
where $h_1$ has connected fibers. Then $g(C)\geq g(S)\geq 2$, and
$h_1$ has to be non-trivial on $\pi_1(M)$. Since the map $h_1$ is
holomorphic and $M$ is compact, we conclude that $h_1$ is
surjective. Thus the map $h_1$ satisfies all the requirements in 
statement~I., and the theorem is proved.
\end{proof}

\begin{rem}\label{r:max}
Suppose $k$ is the maximal integer for which $\pi_1(M)$ surjects onto
$F_k$. (A maximal $k$ exists and is bounded above by $b_1(M)$.) Then
the genus of $C$ can not be larger than $k$, because $(h_1)_*$ is
surjective on $\pi_1(M)$, and $\pi_1(\Sigma_g)$ surjects onto
$F_g$. The above proof then shows that the genus of $C$ is in fact
equal to $k$.
\end{rem}

\begin{cor}\label{c:free}
A non-abelian free group can not be the fundamental group of a closed
pluri-K\"ahler--Weyl manifold.
\end{cor}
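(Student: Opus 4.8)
The plan is to argue by contradiction and feed the hypothesis directly into Theorem~\ref{t:BS}. Suppose $M$ is a closed pluri-K\"ahler--Weyl manifold with $\pi_1(M)\cong F_k$ for some $k\geq 2$. Then the identity homomorphism $\pi_1(M)\to F_k$ is a surjection onto a non-abelian free group, so statement~III of Theorem~\ref{t:BS} holds for $M$. By the theorem, statement~I also holds: there is a surjective holomorphic map $h_1\colon M\to C$ with connected fibers onto a closed Riemann surface $C$ of genus $g(C)\geq 2$.

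Now I would derive the contradiction from the structure of such a fibration. The induced map $(h_1)_*\colon\pi_1(M)\to\pi_1(C)$ is surjective (this is part of how statement~I is obtained in the proof of Theorem~\ref{t:BS}, via the Stein factorization and the non-triviality of $h_1$ on $\pi_1$). So $\pi_1(C)=\pi_1(\Sigma_g)$ with $g=g(C)\geq 2$ is a quotient of $F_k$. But a free group cannot surject onto the fundamental group of a closed surface of genus $\geq 2$: the surface group $\pi_1(\Sigma_g)$ has nontrivial $H^2(\pi_1(\Sigma_g);\bZ)\cong\bZ$ (it is a closed aspherical $2$-manifold group, hence a Poincar\'e duality group of dimension $2$), whereas any free group has cohomological dimension $1$, so $H^2(F_k;A)=0$ for all coefficient modules $A$. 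A surjection $F_k\twoheadrightarrow\pi_1(\Sigma_g)$ would, however, split — since $F_k$ is free and we may lift generators of $\pi_1(\Sigma_g)$ — making $\pi_1(\Sigma_g)$ a retract of $F_k$; a retract of a group of cohomological dimension $\leq 1$ has cohomological dimension $\leq 1$, contradicting $H^2(\pi_1(\Sigma_g);\bZ)\neq 0$. Alternatively, and more elementarily: a surjection from a free group onto $\pi_1(\Sigma_g)$ splits, exhibiting $\pi_1(\Sigma_g)$ as a subgroup of $F_k$, but every subgroup of a free group is free (Nielsen--Schreier), while $\pi_1(\Sigma_g)$ for $g\geq 2$ is not free (it is not even residually... — rather, one notes it has relations that are not consequences of a free presentation, or simply that it has $H^2\neq 0$). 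This contradiction completes the argument.

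I expect essentially no obstacle here: the corollary is an immediate formal consequence of Theorem~\ref{t:BS} together with the elementary group-theoretic fact that surface groups of genus $\geq 2$ are not quotients of free groups. The only point requiring a line of justification is the surjectivity of $(h_1)_*$ on fundamental groups, which is already established within the proof of Theorem~\ref{t:BS} (the Stein factorization step shows $h_1$ is holomorphic, surjective, with connected fibers, and non-trivial on $\pi_1$; for a holomorphic fibration with connected fibers onto a curve the induced map on $\pi_1$ is automatically surjective). If one wishes to avoid even citing that, one can instead note directly from Remark~\ref{r:max} that the genus of $C$ equals the maximal rank $k$ of a free quotient of $\pi_1(M)$, and then observe that $\pi_1(\Sigma_k)$ does not surject onto $F_k$ unless... — but in fact $\pi_1(\Sigma_k)$ \emph{does} surject onto $F_k$, so the clean contradiction is the one above: $\pi_1(M)=F_k$ forces a surjection $F_k\to\pi_1(\Sigma_g)$ with $g\geq 2$, which is impossible.
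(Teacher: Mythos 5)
Your reduction to Theorem~\ref{t:BS} is fine as far as it goes: statement~III holds, so you obtain a surjective holomorphic map $h_1\colon M\to C$ with connected fibers onto a curve of genus $\geq 2$, and $(h_1)_*$ is surjective on fundamental groups. The gap is in the group-theoretic punchline. It is \emph{not} true that a free group cannot surject onto $\pi_1(\Sigma_g)$ for $g\geq 2$: the free group $F_{2g}$ surjects onto $\pi_1(\Sigma_g)$ by sending its $2g$ free generators to the standard generators $a_1,b_1,\dots,a_g,b_g$. Your splitting argument fails at exactly this point: lifting the generators of $\pi_1(\Sigma_g)$ to $F_k$ gives a set map that need not extend to a homomorphism $\pi_1(\Sigma_g)\to F_k$, because the surface relation has no reason to go to the identity. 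The lifting property of free groups lets you lift maps \emph{out of} a free group, not construct sections of surjections \emph{onto} a non-free group. So from the statement of Theorem~\ref{t:BS} alone you only learn that $F_k$ surjects onto some genus $\geq 2$ surface group, which is no contradiction at all (it merely forces $k\geq 2g(C)$ by abelianization).

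What rescues the corollary --- and is what the paper actually does --- is to use the \emph{proof} of Theorem~\ref{t:BS} rather than just its statement: the injective composite $\pi_1(M)\stackrel{\cong}{\longrightarrow}F_k\hookrightarrow\pi_1(\Sigma_k)$ is induced by the map $f$ that factors through $h_1$, so the surjection $(h_1)_*\colon\pi_1(M)\to\pi_1(C)$ has trivial kernel and is therefore an isomorphism. Hence $F_k\cong\pi_1(C)$ with $g(C)\geq 2$, which is absurd since a closed surface group of genus $\geq 2$ is not free (it has nontrivial $H^2$). Alternatively, the second route you started and then abandoned does work: by Remark~\ref{r:max} the genus of $C$ equals $k$, the maximal rank of a free quotient of $\pi_1(M)\cong F_k$, so $(h_1)_*$ would be a surjection $F_k\to\pi_1(\Sigma_k)$; on abelianizations this is a surjection $\bZ^k\to\bZ^{2k}$, which is impossible. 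Either way, the essential input is the injectivity of the factored map (or the genus count), not the false claim that free groups never surject onto surface groups.
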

\begin{proof}
Suppose that $M$ is a pluri-K\"ahler--Weyl manifold whose fundamental
group $\pi_1(M)$ is isomorphic to $F_k$, where $k\geq 2$. Fix an
isomorphism $\varphi\colon\pi_1(M)\longrightarrow F_k$. The proof of
Theorem~\ref{t:BS} shows that $\varphi$ factors through a surjection
to the fundamental group of a closed Riemann surface, and thus can not
be injective -- a contradiction.
\end{proof}

Replacing the hyperbolic surfaces in Theorem~\ref{t:BS} by closed
hyperbolic manifolds of higher dimension, we obtain the following
generalization:
\begin{thm}\label{t:main}
Let $M$ be a closed complex manifold admitting a pluri-K\"ahler--Weyl
structure, and $N$ a closed Riemannian manifold of constant negative
curvature. If $\varphi\colon\pi_1(M)\longrightarrow\pi_1(N)$ is a
representation with non-cyclic image, then there exists a compact
Riemann surface $S$ and a holomorphic map $h\colon M\longrightarrow S$ 
with connected fibers such that $\varphi$ factors through $h_*$. 
\end{thm}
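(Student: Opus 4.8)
The plan is to follow the strategy of the proof of Theorem~\ref{t:BS} almost verbatim, replacing the surface $\Sigma_k$ by the given hyperbolic manifold $N$. First I would realize $\varphi\colon\pi_1(M)\longrightarrow\pi_1(N)$ by a smooth map $f\colon M\longrightarrow N$ with $f_*=\varphi$ on $\pi_1$; this exists since $N$ is aspherical (being of constant negative curvature, its universal cover is contractible). Then, because the image of $\varphi$ is non-cyclic and $N$ carries a metric of nonpositive curvature, I would invoke the existence result~\cite[Theorem~2.2]{Kok} to homotope $f$ to a Weyl harmonic map, still called $f$.

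Next I would establish the two rank conditions needed to apply the factorization theorem. By unique continuation $f$ cannot be constant on an open set, and by~\cite[Proposition~1.2]{Kok} its image cannot be contained in a closed geodesic (that would force $f_*$ to have cyclic image). It remains to see that the rank of $Df$ is at most two on an open dense set, i.e.\ that the image is (generically) at most two-dimensional. This is exactly the place where the classical Carlson--Toledo/Sampson-type argument enters: a pluriharmonic map from a compact complex manifold into a negatively curved manifold cannot have rank $\geq 3$ anywhere, because the pluriharmonicity forces the image of the $(1,0)$-part of the differential to be an isotropic subspace for the complexified curvature form, and in constant negative curvature such subspaces are at most complex one-dimensional, hence the real rank is $\leq 2$. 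Since $N$ has nonpositive Hermitian sectional curvature in the sense of Sampson, \cite[Theorem~4.2]{Kok} first upgrades the Weyl harmonic map to a pluriharmonic one, and then the same curvature computation bounds the rank.

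Once the rank is at most two everywhere and equals two on an open dense set, I would apply the factorization theorem, Theorem~\ref{t:fact}, to write $f=\phi\circ h$ with $h\colon M\longrightarrow S$ holomorphic onto a compact Riemann surface and $\phi\colon S\longrightarrow N$ harmonic. Passing to the Stein factorization $M\stackrel{h_1}{\longrightarrow}C\stackrel{h_2}{\longrightarrow}S$ gives a holomorphic map $h_1$ with connected fibers; since $M$ is compact and $h_1$ is holomorphic and nonconstant, it is surjective, and $\varphi=f_*$ factors through $(h_1)_*$ as $\varphi=\phi_*\circ(h_2)_*\circ(h_1)_*$ on $\pi_1$. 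Taking $S$ to be $C$ (relabelling) yields the statement.

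The main obstacle, and the only real content beyond bookkeeping, is the rank bound: proving that a pluriharmonic map into a space of constant negative curvature has rank $\leq 2$. If one wants a genus bound on $S$ one also needs to check that $\phi_*$ (hence the composite) remains non-cyclic, so that $S$ has genus $\geq 2$; but the statement as phrased only asks for a holomorphic $h$ through which $\varphi$ factors, so this extra remark is optional. Everything else — the existence of the harmonic representative, unique continuation, the geodesic alternative, Stein factorization — is either quoted from~\cite{Kok} or standard.
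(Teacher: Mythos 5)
Your proposal is correct and follows essentially the same route as the paper: a Weyl harmonic representative of $\varphi$, pluriharmonicity via \cite[Theorem~4.2]{Kok}, generic rank two via the unique continuation/closed geodesic alternative, the rank bound $\leq 2$ (which the paper simply quotes as \cite[Corollary~4.6]{Kok} and you re-derive by the Sampson--Carlson--Toledo curvature argument), and finally Theorem~\ref{t:fact} together with the Stein factorization. The only cosmetic difference is that you spell out the proof of the cited rank bound rather than quoting it.
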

Note that under these hypotheses the statements I, II and III in
Theorem~\ref{t:BS} hold for $M$. The proof of Theorem~\ref{t:main} is
similar to the one of Theorem~\ref{t:BS}; cf. also the proof
of~\cite[Theorem~5.5]{Kok}, where the situation when $\varphi$ is an
isomorphism is considered. In slightly more detail, there is a Weyl
harmonic map from $M$ equipped with a pluri-K\"ahler--Weyl structure
to $N$ with its hyperbolic metric which induces $\varphi$ on
$\pi_1(M)$. This map is pluriharmonic and has rank at least two on an
open and dense subset of $M$, just as in the above proof of
Theorem~\ref{t:BS}. By~\cite[Corollary~4.6]{Kok}, the rank is at most
two. Therefore, the conclusion follows using Theorem~\ref{t:fact}.

As an immediate consequence of Theorem~\ref{t:main} we have the following
result, which is interesting in the context of the relation $M\geq N$ on manifolds defined 
by the existence of maps $M\longrightarrow N$ of non-zero degree. We refer to~\cite{CT,KL}
for further information on this relation.
\begin{cor}\label{c:dom}
Let $M$ be a  closed pluri-K\"ahler--Weyl manifold, and $N$ a closed real hyperbolic
manifold of dimension $\geq 4$. Then every map $f\colon M\longrightarrow N$ has degree zero.
\end{cor}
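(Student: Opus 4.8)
The plan is to derive Corollary~\ref{c:dom} from Theorem~\ref{t:main} by a contradiction argument built on the cohomological consequences of a factorization through a Riemann surface. Suppose $f\colon M\longrightarrow N$ has nonzero degree, where $N$ is a closed real hyperbolic manifold of dimension $n\geq 4$. First I would observe that a nonzero-degree map between closed oriented manifolds is surjective on rational cohomology (by the projection formula $f_*(f^*x \smile y)=x\smile f_*y$ together with $f_*f^* = (\deg f)\cdot\mathrm{id}$), hence $f^*\colon H^n(N;\bQ)\longrightarrow H^n(M;\bQ)$ is injective, and more relevantly $f_*\colon\pi_1(M)\longrightarrow\pi_1(N)$ has image of finite index in $\pi_1(N)$; in particular the image is non-cyclic, since $\pi_1(N)$ is a torsion-free non-elementary hyperbolic (indeed $\mathrm{CAT}(-1)$) group whose finite-index subgroups are never cyclic.

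Next I would invoke Theorem~\ref{t:main} with $\varphi = f_*$: there is a compact Riemann surface $S$, a holomorphic surjection with connected fibers $h\colon M\longrightarrow S$, and a homomorphism $\psi\colon\pi_1(S)\longrightarrow\pi_1(N)$ with $\varphi = \psi\circ h_*$. The key point is then a dimension count on the image of $\varphi_*$ on homology, or equivalently on the pullback $\varphi^*$ on cohomology. Since $\varphi = f_*$ factors as $\pi_1(M)\xrightarrow{h_*}\pi_1(S)\xrightarrow{\psi}\pi_1(N)$, the induced map on group cohomology (and hence on the cohomology of the aspherical space $N$, which computes $H^*(\pi_1(N))$) factors through $H^*(\pi_1(S);\bQ)=H^*(S;\bQ)$, a ring concentrated in degrees $0,1,2$. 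Therefore $f^*\colon H^k(N;\bQ)\longrightarrow H^k(M;\bQ)$ is the composition $H^k(N;\bQ)=H^k(\pi_1 N;\bQ)\longrightarrow H^k(S;\bQ)\longrightarrow H^k(M;\bQ)$, which vanishes for $k\geq 3$. But taking $k=n\geq 4$ and recalling that $f$ has nonzero degree, $f^*$ must be injective on $H^n(N;\bQ)\cong\bQ$ — a contradiction. Hence $\deg f = 0$.

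The step I expect to be the main obstacle is the passage from the factorization of $\varphi$ \emph{through the fundamental group of $S$} to the factorization of the map $f$ on \emph{cohomology in all degrees}. This rests on the asphericity of $N$: since $N$ has a metric of constant negative curvature, its universal cover is $\bH^n$, which is contractible, so $N=K(\pi_1(N),1)$ and $H^*(N;\bQ)=H^*(\pi_1(N);\bQ)$; the classifying-space functoriality then gives a map $N\to K(\pi_1(S),1)$ realizing the algebraic factorization, and composing with $h\colon M\to S\simeq K(\pi_1(S),1)$ on $2$-skeleta recovers $f$ up to homotopy on the level needed for cohomology. One must be slightly careful that $h$ itself need not induce the classifying map $M\to K(\pi_1 S,1)$ exactly, but it does so on $\pi_1$, hence up to homotopy through the $2$-skeleton, which suffices since the target $K(\pi_1 S,1)\simeq S$ is $2$-dimensional as far as rational cohomology is concerned. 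Once this is in place, the degree obstruction argument closes immediately.
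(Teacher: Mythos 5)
Your proposal is correct and follows essentially the same route as the paper: nonzero degree forces $f_*$ to have finite-index (hence non-cyclic) image in $\pi_1(N)$, Theorem~\ref{t:main} then factors everything through a compact Riemann surface, and a two-dimensional intermediate space is incompatible with nonzero degree onto a manifold of dimension $\geq 4$. The only cosmetic difference is that the paper observes directly that the (Weyl harmonic representative of the) map itself factors through the surface up to homotopy, while you recover the cohomological vanishing in degree $n$ from the $\pi_1$-level factorization via asphericity of $N$ and $S$; both closings are valid.
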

\begin{proof}
If the degree were non-zero, then $f_*\colon\pi_1(M)\longrightarrow\pi_1(N)$ 
would be surjective onto a finite index subgroup. By Theorem~\ref{t:main}
the harmonic map in the homotopy class of $f$ would factor through a two-dimensional
manifold, showing that the degree had to be zero after all.
\end{proof}

In~\cite[Theorem~5.5]{Kok} it was proved that the fundamental groups
of closed hyperbolic manifolds of dimension $\geq 3$ can not be
fundamental groups of closed pluri-K\"ahler--Weyl manifolds. We now
extend this result:
\begin{cor}\label{c:central}
Let $N$ be a closed hyperbolic manifold of dimension $\geq 3$. Any
group $\Gamma$ which fits into a central extension of the form
$$
1\longrightarrow\Z^k\longrightarrow\Gamma\stackrel{\varphi}{\longrightarrow}
\pi_1(N)\longrightarrow 1
$$
can not be the fundamental group of a closed pluri-K\"ahler--Weyl
manifold.
\end{cor}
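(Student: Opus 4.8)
The plan is to reduce the statement to Theorem~\ref{t:main} by exploiting the fact that the central $\Z^k$ in the extension cannot possibly be detected by a map to a surface group, which forces any would-be pluri-K\"ahler--Weyl manifold with $\pi_1=\Gamma$ to fiber in a way that contradicts the centrality. Concretely, suppose for contradiction that $M$ is a closed pluri-K\"ahler--Weyl manifold with $\pi_1(M)\cong\Gamma$, and identify $\pi_1(M)$ with $\Gamma$ via some fixed isomorphism. Composing with $\varphi$ gives a homomorphism $\pi_1(M)\longrightarrow\pi_1(N)$ which is surjective, hence has non-cyclic image since $\pi_1(N)$ is the fundamental group of a closed hyperbolic manifold of dimension $\geq 3$ and therefore contains a non-abelian free subgroup (indeed is not virtually cyclic). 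So Theorem~\ref{t:main} applies: there is a compact Riemann surface $S$ and a surjective holomorphic map $h\colon M\longrightarrow S$ with connected fibers such that $\varphi$ factors as $\varphi=\psi\circ h_*$ for some homomorphism $\psi\colon\pi_1(S)\longrightarrow\pi_1(N)$.

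The next step is to examine what the factorization forces on the central subgroup $\Z^k\subset\Gamma=\pi_1(M)$. Since $h$ has connected fibers and $S$ is a closed Riemann surface of genus $g=g(S)\geq 2$, the map $h_*\colon\pi_1(M)\longrightarrow\pi_1(S)$ is surjective with kernel the normal closure of $\pi_1(F)$, where $F$ is a generic fiber. The crucial observation is that $h_*(\Z^k)$ is a central subgroup of the image $h_*(\pi_1(M))=\pi_1(S)$ — because the image of a central subgroup under any surjection is central. But $\pi_1(S)$ is a closed surface group of genus $\geq 2$, which has trivial center. Hence $h_*(\Z^k)=\{1\}$, i.e.\ $\Z^k\subset\Ker h_*$. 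Consequently $\varphi(\Z^k)=\psi(h_*(\Z^k))=\{1\}$, so $\Z^k\subset\Ker\varphi$. Since by hypothesis $\Ker\varphi=\Z^k$ exactly, this says $\Z^k=\Ker\varphi$, which is automatically consistent — so to get the contradiction one must push a little further, using that $h_*$ is surjective onto $\pi_1(S)$ while $\varphi$ and the extension must be compatible.

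Here is where the real argument lies. We have $\Z^k\subset\Ker h_*$, so $h_*$ descends to a surjection $\bar h_*\colon\Gamma/\Z^k=\pi_1(N)\longrightarrow\pi_1(S)$. Thus $\pi_1(N)$ surjects onto a closed surface group of genus $\geq 2$. But $\pi_1(N)$ is the fundamental group of a closed hyperbolic manifold of dimension $\geq 3$, and for such $N$ no surjection onto a surface group of genus $\geq 2$ can exist: this is exactly the content of~\cite[Theorem~5.5]{Kok} (together with the remark that a closed hyperbolic $3$-manifold group that surjects a surface group would itself give a fibration, contradicting the rigidity / the vanishing arguments there) — alternatively, one invokes that by Theorem~\ref{t:main} applied with $M=N$ (a hyperbolic manifold carries no pluri-K\"ahler--Weyl structure for $\dim\geq 3$, but more to the point) the quoted result from~\cite{Kok} already rules out closed hyperbolic $n$-manifold groups, $n\geq 3$, being (or surjecting appropriately onto) surface groups. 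This contradiction completes the proof.

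The main obstacle is the last step: one must be careful that the result being quoted from~\cite[Theorem~5.5]{Kok} is genuinely strong enough to forbid a \emph{surjection} $\pi_1(N)\twoheadrightarrow\pi_1(S)$, not merely an isomorphism. If~\cite{Kok} only rules out the isomorphism case, then the cleaner route is: compose $\bar h_*$ with the surjection $\pi_1(S)\twoheadrightarrow F_g$ (sending $S$'s genus decomposition to a wedge of circles, as in the proof of Theorem~\ref{t:BS}) to get a surjection $\pi_1(N)\twoheadrightarrow F_g$ with $g\geq 2$; lift this through $\varphi$ to a surjection $\Gamma\twoheadrightarrow F_g$, observe $\Z^k$ lies in its kernel; now realize $\Gamma$ as $\pi_1(M)$ and apply Theorem~\ref{t:BS}, which upgrades the surjection onto $F_g$ to a holomorphic fibration $M\to C$ with $g(C)=k'\geq 2$; then the same centrality argument as above shows $\pi_1(C)$ has trivial center while receiving a central image — yielding the contradiction internal to the already-established machinery, with no appeal to any statement stronger than what is proved in this paper.
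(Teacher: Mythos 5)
Your reduction to Theorem~\ref{t:main} and the centrality observation ($h_*(\Z^k)$ is central in $\pi_1(S)$, hence trivial, so $h_*$ descends to $\bar h_*\colon\pi_1(N)\to\pi_1(S)$) match the paper exactly. But the final step --- where the contradiction must come from --- is where your argument breaks down, and you never repair it. Your main route asserts that $\pi_1(N)$ cannot \emph{surject} onto a closed surface group of genus $\geq 2$, citing~\cite[Theorem~5.5]{Kok}. That result says something entirely different (hyperbolic manifold groups of dimension $\geq 3$ are not pluri-K\"ahler--Weyl groups) and does not forbid such surjections; indeed, such surjections do exist in general (e.g.\ a closed hyperbolic $3$-manifold admitting a degree-one map onto $\Sigma_2\times S^1$ has fundamental group surjecting onto $\pi_1(\Sigma_2)$), so the claim is not merely unproved but false. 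Your fallback route in the last paragraph fares no better: producing a fibration $M\to C$ via Theorem~\ref{t:BS} and then observing that ``$\pi_1(C)$ has trivial center while receiving a central image'' is not a contradiction --- the central image is simply trivial, which is exactly what the centrality argument already established and is perfectly consistent.

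The missing idea is to use the \emph{full} factorization from Theorem~\ref{t:main}, namely $\varphi=\psi\circ h_*$ with $\psi\colon\pi_1(S)\to\pi_1(N)$, after descending. Since $\Z^k\subset\Ker h_*$, both $h_*$ and $\varphi$ descend to $\Gamma/\Z^k=\pi_1(N)$: the map $\varphi$ descends to the identity of $\pi_1(N)$, and the factorization becomes $\mathrm{id}_{\pi_1(N)}=\psi\circ\bar h_*$. Thus the identity of $\pi_1(N)$ factors through $\pi_1(S)$, so $\bar h_*$ is injective and $\pi_1(N)$ embeds into a surface group. This is impossible: subgroups of closed surface groups are free groups or surface groups, of cohomological dimension $\leq 2$, whereas $\pi_1(N)$, being the fundamental group of a closed aspherical manifold of dimension $\geq 3$, has cohomological dimension $\geq 3$. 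You had the homomorphism $\psi$ in hand from your first paragraph but discarded it; injectivity of $\bar h_*$ (not surjectivity) is what kills the extension.
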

\begin{proof}
Let $M$ be a K\"ahler--Weyl manifold whose fundamental group is
$\Gamma$. Then, by Theorem~\ref{t:main}, $\varphi$ factors through a
surjective homomorphism $f_*\colon\Gamma\longrightarrow\pi_1(S)$, for
some closed Riemann surface $S$ of genus $\geq 2$. Since $\pi_1(S)$
has trivial center, the surjection $f_*$ descends from $\Gamma$ to
the quotient $\pi_1(N)$. However, the identity of $\pi_1(N)$ can not
factor through $\pi_1(S)$, and we obtain a contradiction.
\end{proof}
Note that the dimension assumption on $N$ can not be dropped, as the
direct product of $\Z^2$ with any surface group is the fundamental
group of a compact complex, in fact K\"ahler, surface.
\begin{ex}
Let $N$ be a closed hyperbolic $3$-manifold, and $M\longrightarrow N$
a circle bundle. Then the total space $M$ does not admit a complex
structure by Corollary~\ref{c:central}. A weaker statement in this
direction is proved in~\cite{D} using the Enriques--Kodaira
classification.
\end{ex}

\section{Twistor spaces}\label{s:twistor}

An oriented Riemannian four-manifold is called
half-conformally flat if its Weyl tensor is either self-dual or
anti-self-dual. This is a conformally invariant condition. By work of
Penrose and Atiyah--Hitchin--Singer~\cite{AHS}, a half-conformally
flat four-manifold $N$ has associated to it a complex three-fold $Z$,
called its twistor space, which is differentiably a two-sphere bundle
bundle over $N$. In particular it has the same fundamental group as
$N$. It is a theorem of Taubes~\cite{Taubes} that every closed
oriented four-manifold admits a metric with anti-self-dual Weyl tensor
after stabilization by connected summing with many copies of
$\blowup$. In particular every finitely presentable group is the
fundamental group of a compact complex three-fold obtained as the
twistor space of a suitable four-manifold.

Recall that a discrete group is called large if it has a finite
index subgroup that admits a surjective homomorphism to $F_2$. This
notion was introduced by Gromov~\cite{gromov}, and has many important
ramifications, for example in geometric group theory and in spectral
geometry. For twistor spaces we have:
\begin{thm}\label{t:twistor}
Let $N$ be a closed half-conformally flat four-manifold with large
fundamental group. Then its twistor space is a complex manifold that
does not admit any pluri-K\"ahler--Weyl structure.
\end{thm}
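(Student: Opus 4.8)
The plan is to derive a contradiction from the hypothesis that the twistor space $Z$ admits a pluri-K\"ahler--Weyl structure, using the largeness of $\pi_1(N)=\pi_1(Z)$ together with Corollary~\ref{c:free} and a finite-covering argument. First I would pass to a finite-index subgroup: by definition of largeness there is a finite-index subgroup $G\leq\pi_1(Z)$ admitting a surjection onto $F_2$. Let $\widetilde Z\to Z$ be the corresponding finite cover. Since $Z$ is a closed complex manifold, $\widetilde Z$ is again closed and complex, and a pluri-K\"ahler--Weyl structure pulls back under the (local biholomorphism) covering map to a pluri-K\"ahler--Weyl structure on $\widetilde Z$ — here one checks that the defining data (the complex structure, the conformal class, the Weyl connection, and the pluricanonical condition $(\nabla\theta)^{1,1}=0$) are all local and hence lift. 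Thus $\widetilde Z$ is a closed pluri-K\"ahler--Weyl manifold with $\pi_1(\widetilde Z)=G$ surjecting onto $F_2$.

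Now apply Theorem~\ref{t:BS} to $\widetilde Z$: since $\pi_1(\widetilde Z)$ surjects onto the non-abelian free group $F_2$ (statement III), it follows that $\widetilde Z$ admits a surjective holomorphic map with connected fibers onto a closed Riemann surface $S$ of genus $\geq 2$ (statement I). This already forces $Z$, and hence its twistor fibration, to be quite special. To finish I would invoke the well-known structural rigidity of twistor spaces: a twistor space $Z$ of a half-conformally flat four-manifold carries a natural integrable complex structure whose fibres are rational curves $\CP^1$, and a holomorphic map from $Z$ to a curve of genus $\geq 2$ must be constant on each such rational fibre (a non-constant holomorphic map $\CP^1\to S$ with $g(S)\geq 2$ is impossible). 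Hence the holomorphic map $\widetilde Z\to S$ factors through the twistor fibration of $\widetilde Z$, which is the pullback of the twistor fibration $Z\to N$; that is, it descends (after possibly a further finite cover coming from the monodromy of the two-sphere bundle, which does not affect the argument) to a continuous map $\widetilde N\to S$ inducing a surjection $\pi_1(\widetilde N)\to\pi_1(S)$. But then $\pi_1(\widetilde N)$, a finite-index subgroup of $\pi_1(N)$, surjects onto a surface group and a fortiori onto $F_2$ — which is fine — so one needs the sharper conclusion.

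Here is the cleaner route for the last step. Rather than descending, use the fact that the composition $\pi_1(\widetilde Z)\to\pi_1(S)$ from Theorem~\ref{t:BS} is surjective, and that the twistor fibres are spheres, hence simply connected; therefore the inclusion of a twistor fibre induces the trivial map on $\pi_1$, so $\pi_1(\widetilde Z)\cong\pi_1(\widetilde N)$ via the bundle projection. Consequently $\pi_1(\widetilde N)$ surjects onto $\pi_1(S)$, and the holomorphic fibration $\widetilde Z\to S$ is, up to homotopy, pulled back from a map $\widetilde N\to S$. The real content I would then extract is the obstruction coming from the four-dimensionality and the anti-self-duality: the existence of such a fibration makes $\widetilde N$ (conformally, hence smoothly) a fibred four-manifold over $S$ with the induced Weyl structure making $\widetilde Z$ locally conformally K\"ahler; one then argues, as in the classical theory of elliptic surfaces versus twistor spaces, that a twistor space which is pluri-K\"ahler--Weyl and fibres holomorphically over a curve of genus $\geq 2$ cannot exist because the canonical bundle of a twistor space is negative along the fibres (anticanonical degree $4$ on each $\CP^1$), contradicting the semi-positivity of the pullback of the canonical bundle of $S$.

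The main obstacle is precisely this last step: turning the topological/holomorphic conclusion of Theorem~\ref{t:BS} into an actual contradiction with the geometry of twistor spaces. The cleanest formulation I expect to use is the observation that any holomorphic map from a twistor space to a curve must be constant, because the twistor space is covered by rational curves (the fibres) and rational curves carry no non-constant maps to curves of positive genus; since these rational fibres sweep out all of $\widetilde Z$ and are algebraically connected in a suitable sense, the image of any holomorphic $\widetilde Z\to S$ is a point. That immediately contradicts the \emph{surjectivity} of $h\colon\widetilde Z\to S$ furnished by statement~I of Theorem~\ref{t:BS}. So the proof reduces to: (i) largeness gives a finite cover with $\pi_1\twoheadrightarrow F_2$; (ii) pluri-K\"ahler--Weyl lifts to that cover; (iii) Theorem~\ref{t:BS} produces a surjective holomorphic map to a genus $\geq 2$ curve; (iv) twistor spaces admit no non-constant holomorphic map to such a curve because they are uniruled by the twistor lines — contradiction.
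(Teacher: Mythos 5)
Your steps (i)--(iii) — pass to a finite cover using largeness, note that the pluri-K\"ahler--Weyl structure lifts and that $\pi_1(Z)=\pi_1(N)$, and apply Theorem~\ref{t:BS} to get a surjective holomorphic map $h\colon \widetilde Z\to S$ with connected fibers onto a curve of genus $\geq 2$ — are exactly the paper's argument. The divergence, and the gap, is in step (iv). You claim that no non-constant holomorphic map from a twistor space to a curve of genus $\geq 2$ exists because the twistor lines are rational and ``sweep out all of $\widetilde Z$ and are algebraically connected in a suitable sense.'' As stated this does not follow: the twistor fibers are the fibers of $\widetilde Z\to\widetilde N$ and are therefore pairwise \emph{disjoint}, so a map that is constant on each of them need only factor through $\widetilde N$, not be globally constant. (Uniruledness alone never forbids maps to curves: $\CP^1\times S$ is uniruled and surjects onto $S$.) Your fallback via the canonical bundle also fails: since $h$ is constant on each twistor line, $h^*K_S$ restricts trivially (degree $0$) to it, so there is no conflict with the anticanonical degree of $Z$ on the fiber.

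To close the gap you need one of two genuine inputs. Either (a) the paper's route: $h_*$ is surjective on $\pi_1$, so $h^*$ is injective on $H^1(S;\C)$; the $(1,0)$-part of $H^1(S;\C)$ is nonzero for $g(S)\geq 2$ and pulls back to nonzero holomorphic one-forms on $\widetilde Z$, contradicting the standard vanishing $H^0(\widetilde Z,\Omega^1)=0$ for twistor spaces (this is Lemma~\ref{l:vanish} in the higher-dimensional setting, and a Bochner-type argument using the positivity of the fiber in dimension four). Or (b) the deformation-theoretic fact that a twistor line has normal bundle $\OO(1)\oplus\OO(1)$, so by Kodaira's theorem it moves in a $4$-parameter family of rational curves with a positive-dimensional subfamily through each point; these curves connect nearby points of $\widetilde Z$, so a holomorphic map to a positive-genus curve, being constant on all of them, is locally constant and hence constant, contradicting surjectivity. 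Your write-up gestures at (b) but omits the essential ingredient — that the rational curves \emph{move} and link distinct points — which is precisely what the disjoint fibers fail to provide.
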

This should be compared with a result of Hitchin~\cite{Hitchin}, who
showed that the only K\"ahler twistor spaces are those of $S^4$ and of
$\blowup$; in particular K\"ahler twistor spaces are simply
connected. Theorem~\ref{t:twistor} shows that under the weaker
pluri-K\"ahler--Weyl assumption we can still conclude that the
fundamental group is not large.
\begin{proof}
The assumption about the fundamental group means that after replacing
$N$ by some finite covering, its fundamental group surjects to
$F_2$. As finite covers of pluri-K\"ahler--Weyl manifolds are
pluri-K\"ahler--Weyl, and the twistor space has the same fundamental
group as the four-manifold, we may assume for a contradiction that we
have a twistor space $Z$ whose fundamental group surjects to $F_2$. By
Theorem~\ref{t:BS} this implies that $Z$ fibers holomorphically with
connected fibers over a curve $C$ of genus $\geq 2$. The pullback by
$h\colon Z\longrightarrow C$ is injective on $H^1(C;\C)$. This
cohomology group has a Hodge decomposition, and so we obtain a
contradiction as soon as we see that $Z$ has no holomorphic
one-forms. But this is a standard fact, compare~\cite[p.~27]{ABCKT} or 
Lemma~\ref{l:vanish} below.
\end{proof}
\begin{rem}
Sometimes $N$ can be chosen as a complex K\"ahler surface with
anti-self-dual Weyl tensor, e.g. $\C P^1\times C$, for a curve
$C$ of genus $\geq 2$. In these cases the twistor space $Z$ is
diffeomorphic to $\bP (\OO_N\oplus\OO_N(K_N))$, where $K_N$ is the
canonical bundle of $N$. This shows that the twistor complex structure
is not pluri-K\"ahler--Weyl although the smooth manifold underlying
$Z$ also carries a K\"ahler complex structure.
\end{rem}

\begin{ex}\label{ex:ball}
There are complex algebraic surfaces $N$ which are ball quotients $\C
H^2/\Gamma$, and which fiber holomorphically over curves of genus
$\geq 2$, cf.~for example \cite{BHH}. In particular, the latter
property implies that they have large fundamental groups. Since the
Bergmann metric on $\C H^2$ has self-dual Weyl tensor, the twistor
space $Z$ of such an $N$ is a complex manifold. By
Theorem~\ref{t:twistor}, it can not admit a pluri-K\"ahler-Weyl
structure.
\end{ex}
We shall show in Corollary~\ref{c:twist} below that the largeness of
the fundamental group can be dispensed with in this example.

There is a well known generalization of the construction of twistor
spaces for oriented conformally flat manifolds of arbitrary even
dimension $2n$. We briefly recall this construction, referring
to~\cite{Bianchi,Slup,BMN} for further details.

Let $(N,h)$ be an oriented Riemannian $2n$-manifold, and $Z$ the
quotient of its oriented orthonormal frame bundle by $U(n)\subset
SO(2n)$. This is the bundle of pointwise orthogonal complex structures
on $N$ compatible with the orientation. The total space $Z$, called
the twistor space of $N$, carries a tautological almost complex
structure, which is integrable if $h$ is conformally flat. In this
case $Z$ is a complex manifold, and the fibers of the projection
$Z\longrightarrow N$ are holomorphic submanifolds isomorphic to the
Hermitian symmetric space $X_n = SO(2n)/U(n)$. For example, if
$N=S^{2n}$ is a round sphere, the twistor space is $X_{n+1}$, with the
fiber $X_n$ embedded in the standard way.

\begin{lem}\label{l:vanish}
Let $Z$ be the twistor space of a conformally flat manifold $N$. Then
$Z$ has no non-trivial holomorphic one-forms.
\end{lem}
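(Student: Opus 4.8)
The plan is to restrict a holomorphic one-form on $Z$ to the fibres of the twistor projection $\pi\colon Z\longrightarrow N$, observe that such a form is necessarily horizontal because the fibres carry no holomorphic one-forms, and then kill it altogether using the positivity of the normal bundle of a fibre. This is essentially the argument used for the classical fact that twistor spaces of four-manifolds have no holomorphic one-forms, and the point of the lemma is that it goes through in all even dimensions, with no Kähler hypothesis on $Z$.

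So let $\alpha\in H^0(Z,\Omega^1_Z)$ and fix a fibre $F=\pi^{-1}(x)\cong X_n$, a compact complex submanifold of $Z$; I assume $n\ge 2$, since for $n=1$ one has $Z=N$ and the statement fails for, e.g., a flat torus. First I would note that the pullback of $\alpha$ to $F$ vanishes: $X_n=SO(2n)/U(n)$ is a compact Hermitian symmetric space, in particular a simply connected projective manifold, so $H^0(F,\Omega^1_F)\cong H^{1,0}(F)\subseteq H^1(F;\C)=0$. Hence $\alpha$ annihilates every vector tangent to a fibre, i.e. it vanishes on the vertical subbundle of $TZ$. Reading this off the conormal exact sequence $0\longrightarrow N^*_{F/Z}\longrightarrow\Omega^1_Z|_F\longrightarrow\Omega^1_F\longrightarrow 0$, the image of the section $\alpha|_F$ of $\Omega^1_Z|_F$ in $H^0(F,\Omega^1_F)$ vanishes, so $\alpha|_F$ is a holomorphic section of the conormal bundle $N^*_{F/Z}$.

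The crux is then to show that $N^*_{F/Z}$ has no non-zero holomorphic section. A standard computation identifies the normal bundle: using the splitting $TZ=\mathcal{V}\oplus\mathcal{H}$ furnished by the Levi-Civita connection of a metric in the conformal class, together with the tautological almost complex structure of $Z$, one gets $N_{F/Z}\cong\mathcal{H}|_F$, which is the homogeneous vector bundle on $X_n=SO(2n)/U(n)$ associated to the standard representation $\C^n$ of $U(n)$; equivalently it is the model normal bundle of the inclusion $X_n\hookrightarrow X_{n+1}$ of a fibre of the twistor fibration of the round sphere $S^{2n}$. For $n=2$ this is the normal bundle $\OO(1)\oplus\OO(1)$ of a line $\C P^1\subset\C P^3$ computed by Atiyah--Hitchin--Singer~\cite{AHS}, so the conormal bundle is $\OO(-1)\oplus\OO(-1)$ and has no sections; in general $N^*_{F/Z}$ is the homogeneous bundle associated to the dual representation $(\C^n)^*$, and the Bott--Borel--Weil theorem yields $H^0(X_n,N^*_{F/Z})=0$.

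Granting this, $\alpha|_F=0$ as a section of $\Omega^1_Z|_F$, so $\alpha$ vanishes identically along $F$; since the fibres cover $Z$, we conclude $\alpha\equiv 0$. The main obstacle is precisely the vanishing $H^0(X_n,N^*_{F/Z})=0$: for twistor spaces of four-manifolds it is classical, but in higher dimensions it requires the explicit description of the homogeneous normal bundle of $X_n$ inside $X_{n+1}$ — for which I would rely on~\cite{Slup,BMN} — together with the representation-theoretic vanishing above, and this is the step I would write out carefully.
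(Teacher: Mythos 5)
Your argument is correct, and it shares the paper's overall skeleton: the fibres sweep out $Z$, so everything reduces to showing that $\Omega^1_Z\vert_F$ has no holomorphic sections, and conformal flatness lets one replace $F\subset Z$ by the model $X_n\subset X_{n+1}$ coming from the round sphere. Where you diverge is in the vanishing mechanism. The paper kills all of $H^0(F,\Omega^1_Z\vert_F)$ in one stroke: the Kähler--Einstein metric of positive Ricci curvature on $X_{n+1}$ makes $\Omega^1_{X_{n+1}}$ a bundle of negative mean curvature, and the Bochner argument (as in Kobayashi) rules out holomorphic sections of its restriction to $X_n$ --- no case distinction between vertical and horizontal directions, and no representation theory. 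You instead filter $\Omega^1_Z\vert_F$ by the conormal sequence, dispose of the quotient $\Omega^1_F$ via simple connectivity of the Hermitian symmetric space $X_n$, and are left with $H^0(X_n,N^*_{F/Z})$, which you propose to compute by Bott--Borel--Weil after identifying $N_{F/Z}$ with the homogeneous bundle attached to the standard representation of $U(n)$. That identification is right, and the vanishing you defer does hold: the dual standard representation has extreme weight $(0,\dots,0,-1)$, and adding $\rho=(n-1,\dots,1,0)$ for $D_n$ produces the singular weight $(n-1,\dots,1,-1)$ (two entries of absolute value $1$), so \emph{all} cohomology of the conormal bundle vanishes, consistent with your $\OO(-1)\oplus\OO(-1)$ check for $n=2$. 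So your route is complete once that computation is written out; it is more explicit and yields slightly more information (the exact homogeneous structure of the normal bundle, hence the unobstructedness of the fibre deformations), whereas the paper's differential-geometric argument is shorter and sidesteps both the conormal sequence and Bott's theorem. Your remark that $n\ge 2$ is needed is also a fair point that the paper leaves implicit.
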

\begin{proof}
The fibers of the projection $\pi\colon Z\longrightarrow N$ sweep out
$Z$, so it is enough to show that for a fiber $F$ the restriction
$\Omega^1_Z\vert_F$ has no non-trivial holomorphic sections. Taking a
conformal chart for $N$ around $\pi(F)$, we can identify
$\Omega^1_Z\vert_F$ with $\Omega^1_{X_{n+1}}\vert_{X_n}$. The natural
K\"ahler--Einstein metric of $X_{n+1}$ of positive Ricci curvature
induces a metric of negative mean curvature on this bundle, which
therefore has no holomorphic sections by a standard application of the
Bochner vanishing argument, cf.~\cite[Chapter III]{Kob}.
\end{proof}

Combining Theorem~\ref{t:BS} and Lemma~\ref{l:vanish}, we conclude:
\begin{thm}\label{t:cf}
The twistor space of a closed conformally flat manifold with large
fundamental group is a complex manifold which is not
pluri-K\"ahler--Weyl.
\end{thm}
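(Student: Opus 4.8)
The plan is to reduce Theorem~\ref{t:cf} to the already-established Theorem~\ref{t:BS} together with the vanishing statement of Lemma~\ref{l:vanish}, following the template of the proof of Theorem~\ref{t:twistor} in the four-dimensional case. First I would observe that being pluri-K\"ahler--Weyl is inherited by finite covers: a pluricanonical metric and the underlying K\"ahler--Weyl structure pull back, and in complex dimension $2$ the hypothesis is vacuous anyway. Likewise, the twistor space $Z$ of $N$ is an $X_n$-bundle over $N$, hence $\pi_1(Z)\cong\pi_1(N)$, and the twistor space of a finite cover of $N$ is the corresponding finite cover of $Z$. So after passing to a finite cover of $N$ dictated by the largeness hypothesis, I may assume $\pi_1(N)$, and therefore $\pi_1(Z)$, surjects onto $F_2$.

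Next I would argue by contradiction: suppose $Z$ carries a pluri-K\"ahler--Weyl structure. Since $\pi_1(Z)$ surjects onto the non-abelian free group $F_2$, Theorem~\ref{t:BS} (implication III $\Rightarrow$ I) gives a surjective holomorphic map with connected fibers $h\colon Z\longrightarrow C$ onto a closed Riemann surface $C$ of genus $g\geq 2$. Then pullback $h^*\colon H^1(C;\C)\longrightarrow H^1(Z;\C)$ is injective, so $\dim_\C H^1(Z;\C)\geq 2g\geq 4$. On the other hand, $Z$ is a compact complex manifold admitting a K\"ahler--Weyl structure; the question is whether $H^1(Z;\C)$ nonetheless admits a Hodge decomposition forcing the existence of holomorphic one-forms. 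Here I would invoke the same mechanism as in the proof of Theorem~\ref{t:twistor}: the holomorphic one-form $h^*(\text{of a holomorphic one-form on }C)$ would give a nonzero element of $H^0(Z;\Omega^1_Z)$ — but, more carefully, I should note that $C$ is K\"ahler and the pluriharmonic/holomorphic factorization underlying Theorem~\ref{t:BS} actually produces honest holomorphic $1$-forms on $Z$ pulled back from $C$ (this is implicit in the Castelnuovo--de Franchis mechanism that lies behind the factorization), so $H^0(Z;\Omega^1_Z)\neq 0$. This directly contradicts Lemma~\ref{l:vanish}, which asserts that the twistor space of a conformally flat manifold has no nontrivial holomorphic one-forms. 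That contradiction completes the proof.

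The step I expect to be the main obstacle is the passage from ``$Z$ fibers holomorphically over a curve of genus $\geq2$'' to ``$Z$ has a nonzero holomorphic one-form.'' In the K\"ahler setting this is immediate from the Hodge decomposition of $H^1$, but $Z$ need only be pluri-K\"ahler--Weyl, so one cannot simply cite Hodge theory on $Z$. The cleanest way around this is to go back to the construction in Theorem~\ref{t:BS}: the factorization theorem, Theorem~\ref{t:fact}, produces a holomorphic $h\colon Z\to S$ and one composes with a Stein factorization to get $h_1\colon Z\to C$ with connected fibers; then for any holomorphic $1$-form $\alpha$ on $C$ (which exists since $g(C)\geq 2$), the pullback $h_1^*\alpha$ is a nonzero holomorphic $1$-form on $Z$ because $h_1$ is a surjective holomorphic submersion on a dense open set. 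No Hodge theory on $Z$ is needed. Once this is in place, the clash with Lemma~\ref{l:vanish} is formal, and the only remaining care is the bookkeeping about finite covers of twistor spaces and the hereditary nature of the pluri-K\"ahler--Weyl condition, both of which are routine.
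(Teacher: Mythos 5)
Your proposal is correct and follows essentially the same route as the paper, which proves Theorem~\ref{t:cf} by combining Theorem~\ref{t:BS} with Lemma~\ref{l:vanish} exactly along the template of the proof of Theorem~\ref{t:twistor} (pass to a finite cover, fiber over a curve $C$ of genus $\geq 2$, pull back a holomorphic one-form from $C$). Your worry about Hodge theory on $Z$ is resolved just as in the paper: the Hodge decomposition is only used on $H^1(C;\C)$, and the pullback of a holomorphic one-form under the holomorphic surjection is a nonzero holomorphic one-form on $Z$, contradicting Lemma~\ref{l:vanish}.
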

This generalizes the results of~\cite{Slup,Cam,BMN}, showing that
K\"ahlerian twistor spaces are simply connected. There are many
conformally flat manifolds to which Theorem~\ref{t:cf} applies. For
example, it is well known that connected sums of conformally flat
manifolds are again conformally flat. Thus, given any two conformally
flat manifolds of the same dimension with positive first Betti
numbers, their connected sum satisfies the hypotheses of the theorem.

For some conformally flat manifolds more can be said than in
Theorem~\ref{t:cf}: not only is the twistor space not
pluri-K\"ahler--Weyl, but in fact no manifold with the same
fundamental group is pluri-K\"ahler--Weyl. This is so for the
conformally flat manifolds $(S^1\times S^{2n-1})\#\ldots\# (S^1\times
S^{2n-1})$ by Corollary~\ref{c:free}, and for real hyperbolic
manifolds by ~\cite[Theorem~5.5]{Kok}. Here are some more examples:
\begin{ex}
Let $N$ be any closed oriented real hyperbolic manifold of dimension
$2n-1\geq 3$. Then $N\times S^1$ with the product metric is
conformally flat. Its twistor space $Z$ has fundamental group
$\pi_1(N)\times\Z$, and no manifold with such a fundamental group can
be pluri-K\"ahler--Weyl by Corollary~\ref{c:central}. Some hyperbolic manifolds 
also have non-trivial circle bundles over them whose total spaces are conformally flat,
see~\cite{Bel}, and Corollary~\ref{c:central} applies to their
fundamental groups as well.
\end{ex}

\begin{rem}
There is a notion of twistor spaces for quaternionic K\"ahler manifolds in the sense of Salamon~\cite{Sal}. Our discussion
could be generalized to this case, but the generalization would be vacuous. Quaternionic K\"ahler manifolds are 
always Einstein, and so have constant scalar curvature. If the scalar curvature 
is positive, then Salamon~\cite{Sal} proved that the twistor space is K\"ahler and simply connected. If the scalar curvature 
is negative, Semmelmann and Weingart~\cite{SW} proved that the first Betti number vanishes, and so every locally conformally 
K\"ahler structure on the twistor space would in fact be K\"ahler. In the K\"ahler case, Campana~\cite{Cam} showed that the 
fundamental group is
trivial. Finally, in the case of zero scalar curvature, the Cheeger--Gromoll splitting theorem shows that the fundamental group
is virtually Abelian, and therefore not large.
\end{rem}

\section{Further applications}\label{s:appl}

\subsection{When K\"ahler--Weyl implies K\"ahler}

Vaisman has put forward the philosophy that K\"ahler--Weyl manifolds
which are, in a suitable sense, topologically K\"ahler, should in fact
be K\"ahler, cf.~\cite{V,DO}. In this direction, he proved the following:
\begin{prop}{\rm (\cite{V})}\label{p:Vais}
A compact locally conformally K\"ahler manifold which admits some K\"ahler
metric, or, more generally, which satisfies the $\del\bar\del$-Lemma, is globally conformally K\"ahler.
\end{prop}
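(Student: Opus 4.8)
The plan is to show that on a compact locally conformally K\"ahler manifold satisfying the $\del\bar\del$-Lemma, the Lee form $\theta$, which is automatically closed (in complex dimension $\geq 3$; in dimension $2$ one uses the chosen Gauduchon representative), must in fact be exact; global conformal equivalence with a K\"ahler metric then follows immediately by multiplying the given metric by $e^{-f}$, where $\theta = df$. So the heart of the matter is to upgrade ``closed'' to ``exact'' using the cohomological hypothesis. First I would recall that the defining identity $d\omega = \omega\wedge\theta$ exhibits a relation in cohomology: since $\theta$ is closed, the class $[\theta]\in H^1(M;\bR)$ is defined, and wedging with $[\theta]$ annihilates $[\omega]$ in the sense that $d(\omega) = \theta\wedge\omega$ is exact. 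The idea is that this puts $[\theta]$ in a very constrained position.

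Second, I would pass to the Hodge-theoretic picture. Writing $\theta = \theta^{1,0} + \theta^{0,1}$ with $\theta^{0,1} = \overline{\theta^{1,0}}$, closedness of $\theta$ gives $\del\theta^{1,0} = 0$, $\bar\del\theta^{0,1} = 0$ and $\del\theta^{0,1} + \bar\del\theta^{1,0} = 0$. The $\del\bar\del$-Lemma will be invoked to conclude that a form which is both $\del$- and $\bar\del$-closed and which is $d$-exact is in fact $\del\bar\del$-exact, and more usefully, that $\del$-closed $\bar\del$-exact forms are $\del\bar\del$-exact. Concretely, I expect to show that $\bar\del\theta^{1,0} = -\del\theta^{0,1}$ together with the $\del\bar\del$-Lemma forces $\theta^{0,1}$ to be $\bar\del$-exact, say $\theta^{0,1} = \bar\del g$ for a function $g$; reality then gives $\theta^{1,0} = \del\bar g$, and hence $\theta = d(\mathrm{Re}\,(2g))$ up to a harmonic (in particular closed, coclosed) correction that I must separately show vanishes. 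To kill that harmonic piece one uses the Gauduchon gauge: the canonical metric has coclosed Lee form, so the harmonic part of $\theta$ is its Lee form itself, and one shows using $\omega\wedge\theta$ being exact plus the $\del\bar\del$-Lemma that a nonzero harmonic Lee form is impossible --- essentially because $[\theta]\smile[\theta] = 0$ and more refined positivity from $[\omega]$ force $[\theta] = 0$ in $H^1_{dR}(M)$.

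The main obstacle, I expect, is precisely this last step: ruling out a nonzero closed-coclosed Lee form by exploiting the $\del\bar\del$-Lemma together with the structure equation $d\omega = \omega\wedge\theta$. The subtlety is that $\omega$ is not closed, so $[\omega]$ is not a cohomology class, and one cannot directly speak of $[\omega]\smile[\theta]$. The way around this is to work with the $(1,1)$-form and use that, modulo the image of $\bar\del$, the class of $\omega$ behaves well; alternatively, one truncates by the Gauduchon condition, which makes $\theta$ harmonic and then shows via a Bochner-type / integration-by-parts argument against $\omega^{n-1}$ that $\int_M |\theta|^2\, \omega^n = 0$. Granting this vanishing, $\theta$ is exact, and the locally conformally K\"ahler metric becomes globally conformally K\"ahler, which proves the proposition. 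I would also note that when $M$ actually admits a K\"ahler metric this is classical (Vaisman's original argument), and the $\del\bar\del$-Lemma is exactly the minimal hypothesis making that argument go through verbatim.
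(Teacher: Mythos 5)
You have correctly reduced the proposition to showing that the de Rham class of the Lee form $\theta$ vanishes, and your preliminary computations (the type decomposition of $d\theta=0$) match the paper's. But the decisive step --- killing the harmonic part of $\theta$ --- is not actually carried out, and neither of the two mechanisms you propose for it works as stated. The first ($[\theta]\smile[\theta]=0$ plus ``positivity from $[\omega]$'') is a non-starter: $[\theta]\smile[\theta]=0$ holds automatically for any degree-one class, and, as you yourself observe, $\omega$ is not closed, so there is no class $[\omega]$ to invoke. The second (Gauduchon gauge plus an unspecified integration by parts against $\omega^{n-1}$) points in a workable direction but omits the one idea that makes everything go: the anti-Lee form $\alpha=\theta\circ J$. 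The paper's proof rests on the identity $\alpha=-\frac{1}{n-1}d^*_g\omega$, so that $\alpha$ is automatically coclosed and $L^2$-orthogonal to harmonic forms, together with the computation $d\alpha=2i\bar\del\theta^{1,0}$, which exhibits $d\alpha$ as a $d$-exact form of pure type $(1,1)$. The $\del\bar\del$-Lemma then gives $d\alpha=2i\del\bar\del\varphi$ with $\varphi$ real, and the conformal rescaling $g\mapsto e^{\varphi}g$ produces a metric whose anti-Lee form is closed, hence harmonic, hence zero (being in the image of $d^*$); so the rescaled metric is K\"ahler. Note also that your first route, deducing $\bar\del$-exactness of $\theta^{0,1}$ from the $\del\bar\del$-Lemma, is circular: $\theta^{0,1}$ is $\bar\del$-closed, and under the Hodge decomposition furnished by the $\del\bar\del$-Lemma its $\bar\del$-exactness is \emph{equivalent} to $[\theta]=0$, which is exactly what you are trying to prove.

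Your Gauduchon-gauge variant can in fact be completed into a proof genuinely different from the paper's, but only after supplying the missing ingredients: using $*\theta=c\,(\theta\circ J)\wedge\omega^{n-1}$ one gets $\|\theta\|^2_{L^2}=c\int_M\theta\wedge\alpha\wedge\omega^{n-1}$; Stokes applied to $d(\alpha\wedge\omega^{n-1})$ together with $d\omega=\omega\wedge\theta$ converts this into a multiple of $\int_M d\alpha\wedge\omega^{n-1}$; and writing $d\alpha=2i\del\bar\del\varphi$ and integrating by parts twice lands on $\int_M\varphi\,\del\bar\del(\omega^{n-1})$, which vanishes precisely in the Gauduchon gauge. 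As written, however, the proposal asserts the conclusion of such a computation without performing it, so there is a genuine gap.
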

We shall give a proof, following~\cite{DO}, because we need the argument for our next result.
\begin{proof}
Suppose $(M,g,J,\omega)$ is locally conformally K\"ahler of complex dimension $n$, 
with Lee form $\theta$. Let $\alpha = \theta \circ J$ be the anti-Lee form. The definition
of the Lee form implies
$$
\alpha = -\frac{1}{n-1}d^*_g\omega \ ,
$$
where $d^*_g$ is the formal $L^2$-adjoint of the exterior derivative with respect to $g$; compare~\cite{DO}. If we can
globally conformally rescale the metric so that for the new metric the corresponding $\alpha$ is closed, then $\alpha$
is closed and coclosed and therefore harmonic. However, it is also in the image of $d^*$, and so the 
Hodge decompostition theorem implies that $\alpha$ vanishes. This means that the Lee form vanishes,
and so the new metric is K\"ahler.

We know that $\theta$ is always a closed real one-form. Decomposing $d\theta$ into $(p,q)$-types, we find that 
$\del\theta^{1,0}=0=\bar\del\theta^{0,1}$ and $\del\theta^{0,1}+\bar\del\theta^{1,0}=0$.

The anti-Lee form is $\alpha = i (\theta^{1,0}-\theta^{0,1})$. Now the above relations obtained from $d\theta=0$
imply that $d\alpha=2i\bar\del\theta^{1,0}$. Thus $d\alpha$ is an exact form of pure type $(1,1)$. 
Therefore the $\del\bar\del$-Lemma implies that there is a globally defined real function $\varphi$ on $M$
such that $d\alpha=2i\del\bar\del\varphi$.

Consider the metric $h = \exp (\varphi)g$. This is locally conformally K\"ahler with fundamental two-form 
$\exp (\varphi)\omega$ and Lee form $\theta+d\varphi$. Its anti-Lee form is $\alpha+i(\del\varphi-\bar\del\varphi)$,
which is closed because $d\alpha=2i\del\bar\del\varphi$. This completes the proof.
\end{proof}

We now extend Vaisman's result in the following way:
\begin{prop}\label{p:fiber}
Let $M$ be a closed locally conformally K\"ahler manifold admitting a holomorphic
map $f\colon M\longrightarrow N$ to a complex manifold $N$ which is 
K\"ahler, or at least satisfies the $\del\bar\del$-Lemma. Assume that 
$f^*\colon H^1(N;\bR)\longrightarrow H^1(M;\bR)$ is an isomorphism. 
Then $M$ is globally conformally K\"ahler.
\end{prop}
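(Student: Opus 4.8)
The plan is to run the proof of Proposition~\ref{p:Vais}, but to manufacture the conformal factor needed there by pulling it back from $N$, where the $\del\bar\del$-Lemma is available. Fix a locally conformally K\"ahler metric $g$ on $M$, of complex dimension $n$, with Lee form $\theta$ and anti-Lee form $\alpha=i(\theta^{1,0}-\theta^{0,1})$. Exactly as in the proof of Proposition~\ref{p:Vais}, $\theta$ is a closed real one-form, and the identities obtained from $d\theta=0$ give $d\alpha=2i\bar\del\theta^{1,0}$, an exact form of pure type $(1,1)$. The entire task is to exhibit a globally defined real function $\varphi$ on $M$ with $d\alpha=2i\del\bar\del\varphi$: granting this, the metric $h=\exp(\varphi)g$ is locally conformally K\"ahler with anti-Lee form $\alpha+i(\del\varphi-\bar\del\varphi)$, which is then closed, and since it also lies in the image of $d^*_h$, the Hodge-theoretic argument from the first paragraph of the proof of Proposition~\ref{p:Vais} forces it to vanish; thus $h$ is K\"ahler and $M$ is globally conformally K\"ahler.

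To produce $\varphi$ I would use the hypothesis that $f^*\colon H^1(N;\bR)\to H^1(M;\bR)$ is an isomorphism. Since $\theta$ is closed and real, there is a closed real one-form $\theta_N$ on $N$ and a real function $g_0$ on $M$ with $\theta=f^*\theta_N+dg_0$. Because $f$ is holomorphic, $f^*$ preserves bidegree and commutes with $\del$ and $\bar\del$; taking $(1,0)$-parts gives $\theta^{1,0}=f^*(\theta_N^{1,0})+\del g_0$, hence $\bar\del\theta^{1,0}=f^*(\bar\del\theta_N^{1,0})-\del\bar\del g_0$. Applying the computation of the proof of Proposition~\ref{p:Vais} to the closed real one-form $\theta_N$ on $N$, the form $\bar\del\theta_N^{1,0}$ is $d$-exact and of pure type $(1,1)$, so the $\del\bar\del$-Lemma on $N$ yields a real function $\psi$ on $N$ with $\bar\del\theta_N^{1,0}=\del\bar\del\psi$. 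Substituting, $\bar\del\theta^{1,0}=\del\bar\del(f^*\psi-g_0)$, so $\varphi:=f^*\psi-g_0$ has the required property. It is real because $\theta$ and $f^*\theta_N$ are real, forcing $g_0$ to be real, while $\psi$ may be taken real by the usual refinement of the $\del\bar\del$-Lemma for exact real $(1,1)$-forms.

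I do not expect a genuine obstacle; the point is conceptual rather than technical. We are \emph{not} assuming the $\del\bar\del$-Lemma on $M$ itself, so the argument must actually use that the de Rham class of the Lee form is pulled back, via the holomorphic map $f$, from $N$. This is exactly what lets us replace the a priori uncontrolled closed $(1,1)$-form $d\alpha$ by $2i\,f^*(\bar\del\theta_N^{1,0})$ up to a $\del\bar\del$-exact term, and it is the only place where the hypothesis on $f^*$ enters. Everything else is the bookkeeping already carried out in the proof of Proposition~\ref{p:Vais}.
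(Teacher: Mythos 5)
Your proposal is correct and follows essentially the same route as the paper: use the hypothesis on $f^*$ to write the Lee class as a pullback from $N$, apply the $\del\bar\del$-Lemma on $N$ (not on $M$) to the resulting exact real $(1,1)$-form, and transfer the potential back via the holomorphic map to run Vaisman's rescaling argument. The only cosmetic difference is that the paper first rescales the metric so that $\theta$ is literally $f^*\beta$, whereas you carry the exact correction $dg_0$ along and absorb it into the final conformal factor $\varphi=f^*\psi-g_0$; these are interchangeable.
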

\begin{proof}
Since $f^*$ is an isomorphism, the cohomology class of the Lee form $\theta$ is the pullback
of some class on $N$. After rescaling the given locally conformally K\"ahler metric on $M$,
we may assume that the Lee form $\theta$ is itself the pullback of a closed form $\beta$ on $N$. 
As $f$ is holomorphic and $f^*\beta = \theta$, we find that the anti-Lee form $\alpha = \theta\circ J$
equals $f^*(\beta\circ J)$, where we also use $J$ to denote the complex structure on $N$, not just 
on $M$. As in the previous proof, the $\del\bar\del$-Lemma, applied now on $N$, not on $M$,
tells us that there is a real-valued function $\varphi$ on $N$ such that $d(\beta\circ J)=2i\del\bar\del\varphi$.
Pulling back to $M$ we find $d\alpha = 2i\del\bar\del (\varphi\circ f)$. As in the previous proof, the 
metric $\exp (\varphi\circ f)g$ on $M$ is K\"ahler, because its Lee form vanishes identically.
\end{proof}
This proposition allows us to prove the following:
\begin{thm}
Let $M$ be a closed pluri-K\"ahler--Weyl manifold whose fundamental
group admits a surjection $\varphi\colon\pi_1(M)\longrightarrow
\pi_1(\Sigma_g)$ with $g\geq 2$ for which $\varphi^*\colon
H^1(\Sigma_g;\bR)\longrightarrow H^1(M;\bR)$ is an isomorphism. Then
$M$ admits a K\"ahler metric.
\end{thm}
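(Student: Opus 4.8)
The plan is to split the argument according to the complex dimension of $M$. If $\dim_{\C}M=2$, then $M$ is an arbitrary compact complex surface (every complex surface carries a K\"ahler--Weyl structure, which is then automatically pluri-K\"ahler--Weyl), and the hypothesis that $\varphi^*$ is an isomorphism forces $b_1(M)=b_1(\Sigma_g)=2g$, which is even. By the classical fact that a compact complex surface admits a K\"ahler metric if and only if its first Betti number is even, such an $M$ is already K\"ahler and there is nothing to prove. So from now on I would assume $\dim_{\C}M\ge 3$, in which case the Lee form is closed, and hence the given pluri-K\"ahler--Weyl structure is in particular locally conformally K\"ahler.

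Next I would produce a fibration over a curve together with a factorization of $\varphi$ through it. Equip $\Sigma_g$ with a hyperbolic metric, which exists because $g\ge 2$; this makes $\Sigma_g$ a closed Riemannian manifold of constant negative curvature, and the image of the surjection $\varphi$ is all of $\pi_1(\Sigma_g)$, hence non-cyclic. Theorem~\ref{t:main} then yields a compact Riemann surface $S$, a holomorphic map $h\colon M\longrightarrow S$ with connected fibers, and a homomorphism $\psi\colon\pi_1(S)\longrightarrow\pi_1(\Sigma_g)$ with $\varphi=\psi\circ h_*$. (Alternatively one could extract such an $h$ from Theorem~\ref{t:BS} and Remark~\ref{r:max}, but Theorem~\ref{t:main} provides the factorization directly.)

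The heart of the argument is to upgrade the hypothesis on $\varphi^*$ to the statement that $h^*\colon H^1(S;\bR)\longrightarrow H^1(M;\bR)$ is itself an isomorphism. On the one hand, since $h$ is a surjective holomorphic map with connected fibers from a compact complex manifold, $h_*$ is surjective on fundamental groups, and therefore $h^*$ is injective on $H^1(\,\cdot\,;\bR)=\operatorname{Hom}(\pi_1(\,\cdot\,),\bR)$. On the other hand, the relation $\varphi=\psi\circ h_*$ gives $\varphi^*=h^*\circ\psi^*$ on first cohomology; since $\varphi^*$ is surjective by hypothesis, $h^*$ is surjective as well, and hence an isomorphism (as a byproduct, $g(S)=g$). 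Finally, $S$ is a Riemann surface, so it is K\"ahler and in particular satisfies the $\del\bar\del$-Lemma; applying Proposition~\ref{p:fiber} to $h\colon M\longrightarrow S$ then shows that $M$ is globally conformally K\"ahler, so in particular it admits a K\"ahler metric.

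The step I expect to be the main obstacle is the cohomological bookkeeping of the third paragraph: deducing that $h^*$ is an \emph{isomorphism}, rather than merely injective, is precisely where the sharp factorization statement of Theorem~\ref{t:main} and the full strength of the hypothesis on $\varphi^*$ are both needed. A subsidiary point to be checked is the surjectivity of $h_*$ on $\pi_1$ for a holomorphic map with connected fibers between compact complex manifolds, which underlies the injectivity of $h^*$; this is standard but worth stating explicitly.
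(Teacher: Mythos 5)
Your proof is correct and follows essentially the same route as the paper: Buchdahl's even-$b_1$ criterion in complex dimension two, and in higher dimensions a holomorphic fibration over a curve inducing an isomorphism on $H^1(\,\cdot\,;\bR)$, to which Proposition~\ref{p:fiber} is applied. The only (harmless) difference is that you obtain the fibration and the isomorphism on $H^1$ from the explicit factorization $\varphi=\psi\circ h_*$ provided by Theorem~\ref{t:main}, whereas the paper invokes Theorem~\ref{t:BS} together with the genus count of Remark~\ref{r:max}; your bookkeeping is, if anything, more detailed than the paper's.
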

\begin{proof}
First suppose that $M$ is a complex surface. Since $f^*$ is
an isomorphism on the first cohomology, $M$ has even first Betti
number because $N$ does. A compact complex surface with even first Betti number 
is K\"ahler, see Buchdahl~\cite{B} for a proof that is independent of 
the Kodaira classification.

If the complex dimension of $M$ is at least $3$, then we apply Theorem~\ref{t:BS} and Remark~\ref{r:max} to 
obtain a holomorphic map to a closed complex curve inducing an isomorphism in the first cohomology. In this 
case $M$ is locally conformally K\"ahler, and applying Proposition~\ref{p:fiber} to the fibration over a curve shows
that it is in fact globally conformally K\"ahler.
\end{proof}

We can also apply Proposition~\ref{p:fiber} to the twistor spaces of
ball quotients to obtain the following generalization of
Example~\ref{ex:ball}:
\begin{cor}\label{c:twist}
Let $N = \C H^2/\Gamma$ be a compact ball quotient. Then its twistor
space $Z$ is a complex manifold which is not pluri-K\"ahler--Weyl.
\end{cor}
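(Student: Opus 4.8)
The plan is to argue by contradiction: I would suppose that $Z$ carries a pluri-K\"ahler--Weyl structure and deduce that $Z$ must in fact be K\"ahler. That contradicts Hitchin's theorem, because a K\"ahler twistor space is simply connected, whereas $\pi_1(Z)\cong\pi_1(N)=\Gamma$ is infinite.

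First I would record the preliminaries. Since the Bergmann metric on $\C H^2$ is half conformally flat (Example~\ref{ex:ball}), $Z$ is a compact complex threefold which, differentiably, is an $S^2$-bundle over $N$; hence $\pi_1(Z)\cong\Gamma$, and by the Gysin sequence the projection induces an isomorphism $H^1(N;\bR)\cong H^1(Z;\bR)$. Moreover the twistor lines are rational curves that sweep out $Z$, with conormal bundle $\OO(-1)\oplus\OO(-1)$; since the exact sequence $0\to\OO(-1)\oplus\OO(-1)\to\Omega^1_Z|_{\text{line}}\to\OO(-2)\to 0$ has outer terms without sections, $\Omega^1_Z$ has no nonzero holomorphic section on any twistor line, and so $Z$ carries no nonzero holomorphic $1$-form, exactly as in Lemma~\ref{l:vanish}.

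Now assume $Z$ is pluri-K\"ahler--Weyl. Since $\dim_{\bC}Z=3$, the structure is locally conformally K\"ahler. I would split on $b_1(N)=b_1(Z)$. If $b_1(N)=0$, then $H^1(Z;\bR)=0$, so the Lee form of an l.c.K.\ metric is exact; equivalently, Proposition~\ref{p:fiber} applied to the constant map from $Z$ shows that $Z$ is globally conformally K\"ahler, i.e.\ K\"ahler --- contradicting Hitchin's theorem. If $b_1(N)>0$, then, $N$ being a smooth projective ball quotient, its Albanese map is nontrivial; using that $N$ is aspherical with word-hyperbolic fundamental group (so, for instance, it contains no rational or elliptic curves and cannot fiber over a curve of genus $\le 1$), one extracts a surjective holomorphic map from $N$ onto a closed Riemann surface of genus $\geq 2$ with connected fibers, whence $\Gamma=\pi_1(N)=\pi_1(Z)$ surjects onto that surface group. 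Then Theorem~\ref{t:BS} forces $Z$ to fiber holomorphically with connected fibers over a curve $C$ of genus $\geq 2$, and pulling back a nonzero holomorphic $1$-form from $C$ would give a nonzero holomorphic $1$-form on $Z$ (nonzero in $H^1(Z;\bC)$ since $h^*$ is injective on $H^1$), contradicting the previous paragraph. In either case $Z$ cannot be pluri-K\"ahler--Weyl.

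The main obstacle is entirely in the case $b_1(N)>0$: because $Z$ carries no holomorphic $1$-form it cannot be mapped holomorphically to a positive-dimensional complex torus, so Proposition~\ref{p:fiber} is simply not available with $Z$ itself as source once $b_1(Z)>0$, and one is forced to route the argument through a fibration of the base $N$. Establishing that a ball quotient with positive first Betti number admits such a fibration --- i.e.\ that its Albanese image is a curve --- and controlling the genus of that curve by the hyperbolicity of $\Gamma$ (so that Theorem~\ref{t:BS} can be invoked and combined with the vanishing of $H^0(Z,\Omega^1_Z)$) is where the real work lies; the remaining steps are formal consequences of Proposition~\ref{p:fiber}, Theorem~\ref{t:BS}, and Hitchin's characterization of K\"ahler twistor spaces.
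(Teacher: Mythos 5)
Your case $b_1(N)=0$ is correct (an exact Lee form gives a global conformal rescaling to a K\"ahler metric, contradicting Hitchin and Campana), but the case $b_1(N)>0$ contains a genuine gap: it is not true that a compact ball quotient with positive first Betti number admits a surjective holomorphic map with connected fibers onto a curve of genus $\geq 2$. By Castelnuovo--de~Franchis such a fibration is equivalent to a $2$-dimensional isotropic subspace of $H^0(N,\Omega^1_N)$, which a ball quotient with $q\geq 1$ need not possess; for instance the Cartwright--Steger surface is a smooth compact ball quotient with irregularity $q=1$, so its Albanese map fibers it over an \emph{elliptic} curve and it admits no map onto a curve of genus $\geq 2$ (that would force $q\geq 2$). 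Your parenthetical justification is also incorrect: word-hyperbolicity of $\Gamma$ does not rule out a fibration $N\to E$ over a genus-one curve, since such a fibration only requires a surjection $\Gamma\to\Z^2$ on fundamental groups (which hyperbolic groups readily admit), not an embedded $\Z^2$ in $\Gamma$ or an elliptic curve inside $N$. Even when $q\geq 2$ the Albanese image may be two-dimensional, so no fibration over a curve need exist. Hence your argument does not cover all ball quotients with $b_1>0$.

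The paper avoids this entirely by exploiting the twistor projection $\pi\colon Z\to N$ itself rather than trying to fiber $N$ over a curve. Since the $S^2$-fibers of $\pi$ are not null-homologous (the Euler class of the vertical tangent bundle is non-zero on a fiber), every map homotopic to $\pi$ has rank greater than $2$ somewhere; therefore the Weyl harmonic map in the homotopy class of $\pi$ cannot have rank $\leq 2$ everywhere, and by~\cite[Corollary~4.6]{Kok} it is holomorphic. Applying Proposition~\ref{p:fiber} to this holomorphic map $Z\to N$ --- the ball quotient $N$ is K\"ahler and $\pi^*$ is an isomorphism on $H^1$ by the Gysin sequence --- shows that $Z$ is globally conformally K\"ahler, contradicting the theorems of Hitchin and Campana. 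This handles all $\Gamma$ uniformly, with no case split on $b_1$. If you wish to retain your structure, you must replace the $b_1>0$ branch by an argument of this kind; routing through a fibration of $N$ over a hyperbolic curve cannot work in general.
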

\begin{proof}
Consider the projection $\pi\colon Z\longrightarrow N$. Its fibers are
two-spheres, and so the Euler class of the vertical tangent bundle
does not vanish when evaluated on a fiber. Thus the fibers are not
null-homologous, and any smooth map homotopic to $\pi$ must have
maximal rank somewhere.

Now suppose $Z$ is pluri-K\"ahler--Weyl. Then $\pi$ is homotopic to a
Weyl harmonic map by the results of~\cite{Kok}. As the rank of the
Weyl harmonic representative in the homotopy class of $\pi$ is greater
than $2$, it is holomorphic by~\cite[Corollary~4.6]{Kok}. Now we can
apply Proposition~\ref{p:fiber} to this holomorphic map to conclude
that $Z$ is K\"ahler. But this contradicts the results
of~\cite{Hitchin,Cam}.
\end{proof}

\subsection{Period domains}

Denote by $X$ the symmetric space $SO(2p,q)/(SO(2p)\times SO(q))$;
it is Hermitian symmetric if and only if $p=1$ or $q=2$. By the general 
construction in~\cite{GS}, the space $X$ has associated to it a 
homogeneous complex manifold $D=SO(2p,q)/(U(p)\times SO(q))$, 
which one can think of as a twistor space; here the isotropy group of $D$ 
is the centralizer of a torus in $SO(2p)\times SO(q)$. 
The manifold $D$ parametrises Hodge structures of weight $2$ and is 
an example of a Griffiths period domain; see~\cite{GS,ABCKT}
and references there for the details.

The following theorem generalises an analogous result of
Carlson and Toledo in the K\"ahler setting, cf.~\cite[Chapter~6]{ABCKT}.
\begin{thm}\label{t:period} 
Let $\Gamma$ be a torsion-free cocompact lattice in $SO(2p,q)$, where
$p>1$ and $q>2$. Then the compact complex manifold $D/\Gamma$ is not
homotopy equivalent to a compact pluri-K\"ahler--Weyl manifold.
\end{thm}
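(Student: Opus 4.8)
The plan is to combine the factorization machinery of Theorem~\ref{t:main} with the fact, due to Carlson and Toledo in the K\"ahler case, that lattices $\Gamma\subset SO(2p,q)$ with $p>1$, $q>2$ admit \emph{no} interesting maps to Riemann surfaces or to hyperbolic manifolds, so that a pluri-K\"ahler--Weyl structure on a manifold homotopy equivalent to $D/\Gamma$ would force every relevant holomorphic map to a curve to be trivial on $\pi_1$, contradicting the cohomological size of $\Gamma$. Concretely: suppose $M$ is a closed pluri-K\"ahler--Weyl manifold homotopy equivalent to $D/\Gamma$, so $\pi_1(M)\cong\Gamma$. The symmetric space $X=SO(2p,q)/(SO(2p)\times SO(q))$ is of noncompact type and of rank $\ge 2$ (since $p>1$ and $q>2$), so by Margulis superrigidity $\Gamma$ has Kazhdan's property~(T), hence $b_1(M)=b_1(\Gamma)=0$ and also $\Gamma$ has no surjection onto a surface group or onto $F_2$. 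Thus statements I, II, III of Theorem~\ref{t:BS} all fail, and the hypotheses of Theorem~\ref{t:main} fail for every hyperbolic target whose $\pi_1$ receives a noncyclic representation from $\Gamma$ — but this by itself only rules out fibrations over curves, not the existence of the structure.

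So the real input must be of the same flavour as the Carlson--Toledo argument for period domains: one uses the tautological Weyl harmonic map. First I would observe that the fibration $D\longrightarrow X_0 := SO(2p,q)/(U(p)\times SO(q)) \to$ (rather) the natural projection $D/\Gamma\longrightarrow X/\Gamma'$ — more precisely, $X/\Gamma$ is not a manifold in general because $SO(2p)\times SO(q)$ is not the full isotropy, but $D/\Gamma$ maps to the locally symmetric \emph{orbifold} or to a manifold $M_\Gamma$ with the same fundamental group — and transport this to $M$ via the homotopy equivalence: there is a continuous map $f\colon M\longrightarrow M_\Gamma$ inducing the identity on $\pi_1$. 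Now $M_\Gamma$ carries a metric of nonpositive Hermitian sectional curvature in the sense of Sampson (the symmetric space $X$ does, being of noncompact type, and this passes to the quotient). Applying the results of~\cite{Kok}, $f$ is homotopic to a Weyl harmonic map; applying~\cite[Theorem~4.2]{Kok} (pluricanonical hypothesis, or dimension $2$), this map is pluriharmonic; and applying the Sampson--Carlson rigidity for pluriharmonic maps into locally symmetric spaces of this type, the pluriharmonic map is totally geodesic, with image a totally geodesic complex submanifold — but $X$ admits \emph{no} invariant complex structure under the given isotropy assumptions ($p>1$ and $q>2$ exclude the Hermitian symmetric cases $p=1$ and $q=2$), so the image must be a point. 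Then $f$ kills $\pi_1(M)=\Gamma$ up to homotopy, forcing $\Gamma$ to be finite, which is absurd.

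The key steps, in order, are: (1) use the homotopy equivalence to produce a map $f\colon M\longrightarrow M_\Gamma$ onto the locally symmetric model inducing an iso on $\pi_1$; (2) note $M_\Gamma$ has nonpositive Hermitian sectional curvature; (3) replace $f$ by a Weyl harmonic, hence (by pluri-K\"ahler--Weyl) pluriharmonic, representative using~\cite{Kok}; (4) invoke the rigidity theorem for pluriharmonic maps of this type to conclude the map is totally geodesic with complex-analytic image; (5) use the arithmetic of $X$ (no invariant complex structure when $p>1$, $q>2$) to force the image to a point, and derive the contradiction $|\Gamma|<\infty$. The main obstacle I anticipate is step~(4): the relevant rigidity statement for \emph{pluriharmonic} (as opposed to harmonic) maps into nonpositively curved symmetric spaces of noncompact type — the Carlson--Toledo / Sampson theorem in the form needed here — must be available in the Weyl setting; the excerpt gives us Theorem~\ref{t:fact} only for rank-$\le 2$ maps into constant-curvature targets, so the higher-rank period-domain case presumably requires the corresponding rigidity result from~\cite{Kok} or a direct Bochner-formula argument on the pluriharmonic map adapted to the pluricanonical metric. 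Getting that rigidity input into the K\"ahler--Weyl framework, and correctly setting up the target $M_\Gamma$ as an honest manifold rather than an orbifold, are where the work lies.
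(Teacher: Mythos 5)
Your overall instinct---push everything down to the locally symmetric space $X/\Gamma$ and run the Weyl-harmonic/pluriharmonic machinery there---is in the same family as the paper's argument, but steps (4) and (5) of your outline contain a genuine gap, and you omit the one computation that actually closes the proof. The rigidity you invoke does not yield that the pluriharmonic map is totally geodesic with image a complex submanifold; the Carlson--Toledo/Sampson-type conclusion, and its Weyl-harmonic analogue \cite[Corollary~5.2]{Kok} (which is what the paper actually uses), is a statement about the \emph{rank} of the map, packaged as: any such map $M\to X/\Gamma$ is trivial in top-degree cohomology. Even granting your stronger reading, step (5) fails: the fact that $X$ carries no invariant complex structure when $p>1$, $q>2$ does not preclude positive-dimensional totally geodesic \emph{Hermitian} symmetric subspaces of $X$ (copies of lower-rank Hermitian factors embed totally geodesically), so you cannot force the image to be a point, and hence cannot derive the finiteness of $\Gamma$. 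Your $\pi_1$-theoretic observations (property (T), no surjections onto surface groups) correctly rule out the factorization-through-a-curve scenarios, but, as you yourself note, they do not exclude the structure.

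The missing ingredient is cohomological. Granting \cite[Corollary~5.2]{Kok}, one still has to exhibit a class that \emph{is} hit in top degree: the paper shows that the volume class $\Omega$ of $X/\Gamma$ pulls back to a non-zero class on $D/\Gamma$ under the fibration $\pi\colon D/\Gamma\to X/\Gamma$ with fiber $X_p=SO(2p)/U(p)$. This is done by fiber integration: $X_p$ is a compact Hermitian symmetric space of positive Ricci curvature, so its anti-canonical bundle is positive and $c_1^r>0$ on the fibers ($r=\dim_{\C}X_p$), whence $\int_{D/\Gamma}\pi^*\Omega\wedge c_1^r>0$ and $[\pi^*\Omega]\neq 0$. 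Only then does the homotopy equivalence $f\colon M\to D/\Gamma$ produce a map $g=\pi\circ f$ that is non-trivial in top-degree cohomology of $X/\Gamma$, in direct contradiction with \cite[Corollary~5.2]{Kok}. Without this positivity/Fubini step your argument has no contradiction to reach, since a priori $\pi^*\Omega$ could be cohomologically trivial on the total space.
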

\begin{proof}
Let $\pi\colon D\longrightarrow X$ be the natural topologically
trivial fibration with fiber $X_p=SO(2p)/U(p)$. We denote by $c_1$ 
the Chern class of the anti-canonical bundle of $D/\Gamma$ and let
$\Omega$ be a volume form on $X/\Gamma$. Since the fiber $X_p$ is a
compact Hermitian symmetric space of positive Ricci curvature, its anti-canonical bundle is positive;
thus $c_1^r>0$ on $X_p$, where $r$ is the complex dimension of $X_p$. By the
Fubini theorem
$$
\int_{D/\Gamma}\pi^*\Omega\wedge c_1^r>0
$$
and, therefore, the cohomology class of $\pi^*\Omega$ is non-zero.
Suppose there exists a compact pluri-K\"ahler--Weyl manifold $M$ and a
homotopy equivalence $f\colon M\longrightarrow D/\Gamma$. Then the map
$g=\pi\circ f\colon M\longrightarrow X/\Gamma$ is non-trivial in top-degree
cohomology, contradicting~\cite[Corollary~5.2]{Kok}. 
\end{proof}
\begin{rem}
It is not hard to see that the natural complex structure on $D/\Gamma$
is not K\"ahler. This follows from the known fact that the deformation
space of a fiber of $\pi$ is not compact. However, the generalization
to the class of pluri-K\"ahler--Weyl manifolds is already non-trivial.
The point of the theorem is, of course, that no complex structure on a
homotopy equivalent manifold can be pluri-K\"ahler--Weyl.
\end{rem}
\begin{rem}
Theorem~\ref{t:period} applies to all the locally homogeneous complex
manifolds studied by Griffiths and Schmidt~\cite{GS}, as long as $X$ is 
not Hermitian symmetric and $X/\Gamma$ has non-zero Euler number. The
condition on the Euler number is required for the applicability of the results
of~\cite{Kok}.
\end{rem}

\section{Postscript}

The original motivation for this work was the extension of topological
restrictions on K\"ahler manifolds to all K\"ahler--Weyl manifolds. It is clear 
that some such extensions are possible, as shown by the fact that, in complex dimension
$\geq 3$, K\"ahler--Weyl manifolds are locally conformally K\"ahler and,
if the first Betti number vanishes, globally conformally K\"ahler and therefore
K\"ahlerian. This leads to:
\begin{ex}\label{ex:pi1}
Let $\Gamma$ be a finitely presentable group with $b_1(\Gamma)=0$
that is not a K\"ahler group. Then $\Gamma$ can not be the fundamental 
group of any K\"ahler--Weyl manifold.
\end{ex}
We do not have to worry about complex dimension two in this example, because in
that dimension the vanishing of the first Betti number implies that a complex manifold
is K\"ahler.

Example~\ref{ex:pi1}
applies in many concrete instances, e.~g.~when $\Gamma$ is the fundamental
group of a hyperbolic homology sphere, or when it is a free product of two non-trivial 
groups with vanishing first Betti number; compare~\cite{ABCKT}. Thus these groups are not fundamental
groups of K\"ahler--Weyl or locally conformally K\"ahler manifolds in any dimension.
Recall however that, by the result of Taubes~\cite{Taubes}, every finitely presentable group
is the fundamental group of a compact complex three-fold. 

There are also restrictions that apply to specific complex structures only, rather than to all
complex structures on a given manifold.
\begin{ex}\label{ex:prod}
Let $M$ be a compact complex manifold that admits a holomorphic map $f\colon M\longrightarrow B$
to a K\"ahler manifold $B$ for which $f^*$ is an isomorphism on degree one cohomology with real coefficients, 
and some smooth fiber $F=f^{-1}(p)$ is not K\"ahlerian. Then $M$ can not be locally conformally K\"ahler. 
\end{ex}
The assumption about $f^*$ implies, via Proposition~\ref{p:fiber}, that if $M$ is locally conformally 
K\"ahler, then it is K\"ahlerian. In this case, the complex submanifold $F\subset M$ also has to be 
K\"ahlerian, leading to a contradiction.

For technical reasons, which stem from the work in~\cite{Kok}, in this paper we have only proved the 
Siu--Beauville theorem for pluri-K\"ahler--Weyl rather than for all K\"ahler--Weyl manifolds. In fact
the stronger assumption is only needed for one step in the proof, to conclude that Weyl harmonic
maps are pluriharmonic.
It remains an open problem to decide whether the result is true for all K\"ahler--Weyl manifolds. 

After this paper
was written, Ornea and Verbitsky~\cite{OV} pointed out that pluri-K\"ahler--Weyl manifolds are topologically
Vaisman, and that this point of view gives alternative proofs of some of the corollaries of Theorem~\ref{t:BS}.

In summary, there are many compact complex manifolds that are not K\"ahler--Weyl, and there are also
K\"ahler--Weyl manifolds that are not pluri-K\"ahler--Weyl.


\bibliographystyle{amsplain}

\bigskip

\end{document}